\numberwithin{equation}{section}
\theoremstyle{plain}
\newtheorem{theorem}{Theorem}[section]
\newtheorem{lemma}[theorem]{Lemma}
\newtheorem{corollary}[theorem]{Corollary}
\theoremstyle{definition}
\newtheorem{definition}[theorem]{Definition}
\theoremstyle{remark}
\newtheorem{remark}[theorem]{Remark}
\newtheorem{case[theorem]}{Case}
\date{\today}      
\begin{document} 
\title{Interpolation of point configurations in the discrete plane} 

\author{Esen Aksoy} 
\address{Department of Mathematics \\ Koc University}
\email{eaksoyzc@gmail.com} 

\author{Alex Iosevich} 
\address{Department of Mathematics \\ University of Rochester} 
\email{iosevich@gmail.com} 

\author{Brian McDonald}
\address{Department of Mathematics \\ University of Georgia}
\email{brian.mcdonald@uga.edu}

\thanks{The second listed author was supported in part by the National Science Foundation under grant no. 2154232. The second listed author wishes to thank the Isaac Newton Institute (INI) in Cambridge, Great Britain, for their support and hospitality. This work was done in part during the INI program, entitled "Multivariate approximation, discretization, and sampling recovery".}


\begin{abstract}
Defining distances over finite fields formally by $||x-y||:=(x_1-y_1)^2+\cdots + (x_d-y_d)^2$ for $x,y\in \mathbb{F}_q^d$, distance problems naturally arise in analogy to those studied by Erd\H{o}s and Falconer in Euclidean space.  Given a graph $G$ and a set $E\subseteq \mathbb{F}_q^2$, let $\Delta_G(E)$ be the generalized distance set corresponding to $G$.  In the case when $G$ is the complete graph on $k+1$ vertices, Bennett, Hart, Iosevich, Pakianathan, and Rudnev showed that when $|E|\geq q^{d-\frac{d-1}{k+1}}$, it follows that $|\Delta_G(E)|\geq cq^{\binom{k+1}{2}}$.  In the case when $k=d=2$, the threshold can be improved to $|E|\geq q^{\frac{8}{5}}$.  
\\
\\
Moreover, Jardine, Iosevich, and McDonald showed that in the case when $G$ is a tree with $k+1$ vertices, then whenever $E\subseteq \mathbb{F}_q^d$, $d\geq 2$ satisfies $|E|\geq C_kq^{\frac{d+1}{2}}$, it follows that $\Delta_G(E)=\mathbb{F}_q^k$.  In this paper, we present a technique which enables us to study certain graphs with both rigid and non-rigid components.  In particular, we show that for $E\subseteq \mathbb{F}_q^2$, $q=p^n$, $n$ odd, $p\equiv 3 \ \text{mod} \ 4$, and $G$ is the graph consisting of two triangles joined at a vertex, then whenever $|E|\geq q^{\frac{12}{7}}$, it follows that $|\Delta_G(E)|\geq cq^6$. The key to our approach is a configuration interpolation technique that allows us to trade off geometric complications.

\end{abstract}

\date{\today}
\subjclass[2010]{05A05, 05A15, 11T24}
\keywords{Special Linear Group, Fourier Transform}
\maketitle
\section{Introduction} \label{s:intro}
\noindent Let $\mathbb{F}_q^d$ be the $d$-dimensional vector space over the finite field with $q=p^n$ elements, for an odd prime $p$.  We adopt the notion of distance defined by
$$ ||x-y||=(x_1-y_1)^2+\cdots +(x_d-y_d)^2, $$
for $x,y\in \mathbb{F}_q^d$.  

We note that this is not a metric, and $||\cdot||$ is not a norm.  Our ``distances'' are elements of $\mathbb{F}_q$.  Nevertheless, this notion of distance shares geometric, algebraic, and analytic properties with the usual Euclidean distance. For example, if $|| \theta x||=||x||$ if $\theta \in O_2({\mathbb F}_q)$, the group of orthonormal two-by-two matrices over ${\mathbb F}_q$. As such, there are often analogies between distance-related problems over finite fields and their Euclidean analogs. 

One of the distance-type results in ${\mathbb F}_q^d$ that has received much attention in the past decade is the Erdos-Falconer distance problem formulated by the second listed author of this paper and Misha Rudnev in \cite{IR07}. The question they posed is, how large does $E \subset {\mathbb F}_q^d$, $d \ge 2$ needs to be to ensure that $\Delta(E)={\mathbb F}_q$, or, more conservatively, that $|\Delta(E)| \ge \frac{q}{2}$, where here, and throughout, 
$$ \Delta(E)=\left\{||x-y||: x,y \in E \right\}.$$

In \cite{IR07}, the authors proved that if $|E| \ge 3q^{\frac{d+1}{2}}$, then $\Delta(E)={\mathbb F}_q$. In \cite{HIKR11} it was shown that the exponent $\frac{d+1}{2}$ cannot be improved in odd dimensions even if only wish to show that $|\Delta(E)| \ge \frac{q}{2}$. In dimension two, the critical exponent $\frac{4}{3}$ was established in \cite{BHIPR17}. If $q$ is prime, a further improvement down to $\frac{5}{4}$ was shown by Murphy, Petridis, Pham, Rudnev, and Stevens in \cite{MPPRS20}. In higher even dimension, the exponent $\frac{d+1}{2}$ is still the best known and we do not know if it can be improved. 

It is interesting to note that the Erdos-Falconer distance problem in ${\mathbb F}_q^d$ is an arithmetic analog of the Falconer distance problem in Euclidean space. This problem, posed by Falconer in 1986 asks for the smallest threshold $s_0$ such that if the Hausdorff dimension of a compact set $E$ is greater than $s_0$, then the Lebesgue measure of the distance set $\Delta(E)=\{|x-y|: x,y \in E\}$ is positive. Here $|\cdot|$ is the standard Euclidean distance. See, for example, for the latest developments on this problem. 

An interesting analog of the Erdos-Falconer distance problem in ${\mathbb F}_q^d$ arises if instead of considering distances determined by pairs of points, we consider a variety of relationships between the elements of configurations involving multiple points. More precisely, 	Let $G$ be a connected graph on $k+1$ vertices. Let $V=\{x^1,x^2, \dots, x^{k+1}\}$ denote the vertex set and $e_G$ the edge map, where $e_G(i,j)=1$ if $x^i$ and $x^j$ are connected by an edge, and $0$ otherwise. We will only consider undirected graphs with no self-edges, so $e_G(i,i)=0$ and $e_G(i,j)=e_G(j,i)$ for all $i,j$. Let ${\mathcal E}(G)$ denote the edge set, namely 
$$ \left\{(i,j) \in V \times V: e(x^i,x^j)=1 \right\}/\sim,$$ where $\sim$ is the equivalence relation $(i,j)\sim (j,i)$.
	
\begin{definition} Given such a graph, $E \subset {\mathbb F}_q^d$, $d \ge 2$, and a set of positive real numbers 
${\{t_{ij}\}}_{(i,j) \in {\mathcal E}(G)}$, let $\nu_G$ be defined by the relation 
$$ \sum f(\vec{t}) \nu_G(\vec{t})=\sum \dots \sum f(D_G(x^1, \dots, x^{k+1})) E(x^1)E(x^2) \dots E(x^{k+1}),$$ where 
$D_G(x^1, \dots, x^{k+1})$ is a vector of length equal to $|{\mathcal E}(G)|$ with entries $||x^i-x^j||$, where $(i,j) \in {\mathcal E}(G)$, the entries are in the dictionary order. 

\vskip.125in 

Define $\Delta_G(E)$ to be the support of the function $\nu_G$. We refer to this as the generalized distance set of $E\subseteq \mathbb{F}_q^d$ corresponding to the graph $G$.
\end{definition} 

\vskip.125in 

Observe that if $k=1$ and $G$ is the complete graph on two vertices, then $\Delta_G(E)=\Delta(E)$, the distance set defined above. If $k=2$ and $G$ is a complete graph on three vertices, then $\Delta_G(E)$ essentially counts the number of congruence classes of triangles determined by $E$. In the case when $G$ is a complete graph on $k+1$ vertices, it is shown in \cite{BHIPR17} that if $|E| \ge q^{d-\frac{d-1}{k+1}}$, then $|\Delta(E)| \ge cq^{k+1 \choose 2}$. Note that ${k+1 \choose 2}$ is precisely the size of the edge set of $G$. The threshold $d-\frac{d-1}{k+1}$ can be improved to $\frac{8}{5}$ when $k=d=2$. 

\vskip.125in 

Another result relevant to our narrative was established in \cite{IJM2021} where the authors showed that if $G$ is any connected tree graph on $k+1$ vertices, then if $E \subset {\mathbb F}_q^d$, $d \ge 2$, satisfies $|E| \ge C_kq^{\frac{d+1}{2}}$, then $\Delta_G(E)={\mathbb F}_q^k$. 

\vskip.125in 

The key difference between the results for complete graphs and those for tree graphs is that in the former case we can take advantage of rigidity, while in the latter case, we can take advantage of the inductive nature of the structure of the graph. In this paper, we take a step in the direction of combining these concepts by considering a graph in ${\mathbb F}_q^2$ formed by joining two complete graphs on three vertices at a common vertex. We believe that the techniques we develop here can be broadly generalized to handle more complicated configurations. 

\vskip.125in 

{\bf Notation:} We will use the asymptotic notation $A\lesssim B$ to mean that for $q$ sufficiently large, $|A|\lesssim c|B|$, where $c$ is a constant independent of $q$.  We will use this interchangeably with Bachmann-Landau notation, i.e. $A\lesssim B$ is equivalent to $A=O(B)$, which is equivalent to $B=\Omega(A)$.  We will use $|S|$ to denote the cardinality of a finite set $S$, and will often identify a set with its indicator function for convenience, i.e. $S(x)=1$ when $x\in S$ and $S(x)=0$ otherwise.

Let 
$$
S_t=\{x\in \mathbb{F}_q^d: ||x||=t\}
$$
denote the sphere of radius $t$ centered at the origin, so that $S_t(x)=1$ when $||x||=t$.

\begin{definition}
Let $B$ be the ``bow tie" graph consisting of two triangles joined at a vertex.
    
\end{definition}

\begin{figure}[h!]
    \centering
    \begin{tikzpicture}
        \coordinate (y) at (-1.73,1);
        \coordinate (z) at (-1.73,-1);
        \coordinate (x) at (0,0); 
        \coordinate (u) at (1.73,1);
        \coordinate (v) at (1.73,-1);

        \draw[thick] (y) -- (z) -- (x) -- cycle;

        \draw[thick] (x) -- (u) -- (v) -- cycle;

        \fill (y) circle (2pt);
        \fill (z) circle (2pt);
        \fill (x) circle (2pt);
        \fill (u) circle (2pt);
        \fill (v) circle (2pt);

        \node[above left] at (y) {$y$};
        \node[below left] at (z) {$z$};
        \node[above] at (x) {$x$};
        \node[above right] at (u) {$u$};
        \node[below right] at (v) {$v$};
    \end{tikzpicture}
    \caption{The bow tie graph $B$}
    \label{fig:triangles}
\end{figure}
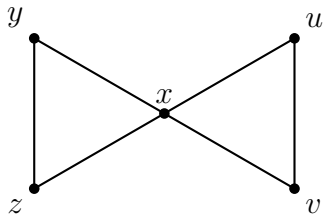

\begin{figure}[h!]
    \centering
    \begin{tikzpicture}
        \coordinate (X) at (0,0);
        \coordinate (Y) at (2,0);
        \coordinate (Z) at (-2,0);
        \coordinate (U) at (1,1.73);
        \coordinate (V) at (3,1.73);

        \draw[thick] (X) -- (Y);
        \draw[thick] (Y) -- (V);
        \draw[thick] (U) -- (V);
        \draw[thick] (U) -- (X);
        \draw[thick] (X) -- (Z);
        \draw[thick] (U) -- (Y);

        \fill (X) circle (2pt);
        \fill (Y) circle (2pt);
        \fill (Z) circle (2pt);
        \fill (U) circle (2pt);
        \fill (V) circle (2pt);

        \node[below left] at (X) {$X$};
        \node[below right] at (Y) {$Y$};
        \node[below] at (Z) {$Z$};
        \node[above] at (U) {$U$};
        \node[below right] at (V) {$V$};
    \end{tikzpicture}
    \caption{The kite graph $K$ used in the proof of Theorem \ref{mainfieldthm}}
    \label{fig:graph}
\end{figure}
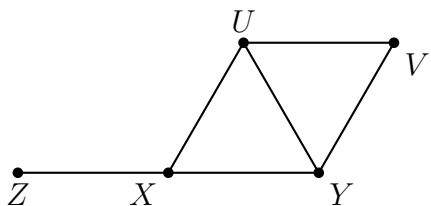

\vskip.125in 

\begin{theorem} \label{mainfieldthm} Let $E \subset {\mathbb F}_q^2$, where $q=p^n$, $n$ odd, $p \equiv 3 \mod 4$. Let $G$ be a bow-tie graph defined above. Suppose that 
$|E| \ge q^{\frac{12}{7}}$. Then there exists $c>0$, independent of $q$ such that $|\Delta_G(E)| \ge cq^6$. 
\end{theorem}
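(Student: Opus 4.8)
\emph{Plan.} The plan is to bound $|\Delta_B(E)|$ from below by a second-moment (Cauchy--Schwarz) argument and then to reduce the resulting ``energy'' estimate to a tractable count by interpolating the bow-tie $B$ against the kite $K$ of Figure~\ref{fig:graph}. Writing $\nu_B$ for the counting measure of the definition, one has $\sum_{\vec t}\nu_B(\vec t)=|E|^5$, so Cauchy--Schwarz gives
\[
|\Delta_B(E)|\;\ge\;\frac{\left(\sum_{\vec t}\nu_B(\vec t)\right)^2}{\sum_{\vec t}\nu_B(\vec t)^2}\;=\;\frac{|E|^{10}}{\sum_{\vec t}\nu_B(\vec t)^2}.
\]
Thus the theorem reduces to the energy bound $\sum_{\vec t}\nu_B(\vec t)^2\lesssim |E|^{10}/q^6$, valid for $|E|\ge q^{\frac{12}{7}}$.

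Next I would exploit that the bow-tie factors at its central vertex. Let $\mu_x(\vec s)$ count the pinned triangles $(y,z)\in E\times E$ whose three side lengths $(\|x-y\|,\|y-z\|,\|z-x\|)$ equal $\vec s$; then $\nu_B(\vec s,\vec t)=\sum_{x\in E}\mu_x(\vec s)\mu_x(\vec t)$, and consequently
\[
\sum_{\vec t}\nu_B(\vec t)^2=\sum_{x,x'\in E}P(x,x')^2,\qquad P(x,x')=\sum_{\vec s}\mu_x(\vec s)\mu_{x'}(\vec s),
\]
where $P(x,x')$ counts congruent pairs of pinned triangles with apexes $x,x'$. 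Here the hypotheses $p\equiv 3 \bmod 4$, $n$ odd enter decisively: they force $-1$ to be a non-square in $\mathbb{F}_q$, so that $\|v\|=0\iff v=0$ (no nonzero isotropic vectors) and $SO_2(\mathbb{F}_q)$ is the anisotropic torus of order $q+1$, with $|O_2(\mathbb{F}_q)|=2(q+1)$. With these facts every non-degenerate triangle congruence is realized by a unique rigid motion $A_{\theta,x,x'}\colon w\mapsto x'+\theta(w-x)$, $\theta\in O_2(\mathbb{F}_q)$, so that $P(x,x')\approx \sum_{\theta}N_\theta(x,x')^2$ with $N_\theta(x,x')=|E\cap A_{\theta,x,x'}^{-1}E|$; the degenerate triangles are collected into a lower-order term annihilated by the non-square hypothesis.

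The difficulty is that the two triangles of $B$ meet only at $x$, so a congruent pair of bow-ties is governed by two \emph{independent} rotations, and a direct expansion of $\sum_{x,x'}P(x,x')^2$ produces off-diagonal $\theta\ne\phi$ terms that are hard to control sharply. This is where the configuration interpolation enters: using the rigid motions above to pull one primed triangle back onto its unprimed partner, the bow-tie energy is matched to a count attached to the kite $K$, in which the central hinge of $B$ is traded for two triangles rigidly linked along a shared edge (a rigid ``core'' $X,Y,U,V$) together with a single pendant vertex $Z$. The kite energy then splits as a product of a rigid factor, controlled by the triangle-congruence bound of \cite{BHIPR17} (threshold $q^{8/5}$, via the $SL_2$/$O_2(\mathbb{F}_q)$ action), and a pendant factor $\sum_t \lambda_X(t)\lambda_{X'}(t)$, where $\lambda_X(t)=|\{Z\in E:\|X-Z\|=t\}|$, controlled by the $L^2$ spherical-averaging estimate underlying the tree result of \cite{IJM2021} (the Fourier decay $|\widehat{S_t}(\xi)|\lesssim q^{1/2}$ for $\xi\neq 0$).

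Carrying out these two estimates, the main term is of size $\sim |E|^{10}/q^6$, while the principal error, arising from the fluctuation of the spherical averages $\lambda_X$ measured against the rigid count, is expected to be of order $\sim |E|^3 q^6$; balancing $|E|^{10}/q^6\gtrsim |E|^3 q^6$ yields precisely $|E|\gtrsim q^{\frac{12}{7}}$. I expect the main obstacle to be exactly this interpolation-plus-estimate step: making the passage from $B$ to $K$ rigorous, and proving the mixed rigid-plus-tree energy bound with the error held at $|E|^3 q^6$ rather than a cruder power. This in turn requires carefully isolating the degenerate and off-diagonal contributions (collinear or isotropic triangles, coincidences among the ten points, and the reflection ambiguity of the rigid core), all of which are to be dispatched using the non-square hypothesis on $q$.
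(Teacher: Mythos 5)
Your skeleton matches the paper's: the Cauchy--Schwarz reduction $|\Delta_B(E)|\ge |E|^{10}/\sum_{\vec t}\nu_B(\vec t)^2$, the group-action reformulation of the energy (your $\sum_{x,x'}P(x,x')^2$ is exactly the paper's $\Psi(2,2)=\sum_{x,x',\theta,\phi}\lambda_\theta(x-\theta x')^2\lambda_\phi(x-\phi x')^2$), and the idea of trading the bow tie for the kite are all the right moves, and the non-square hypothesis plays the role you assign it. But the two steps that carry the actual content of the proof are either missing or wrong in your plan. First, you never identify a mechanism for the bow-tie-to-kite passage: ``pulling one primed triangle back onto its unprimed partner'' is not an argument, and no bijective or geometric matching does this. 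The paper's mechanism is an interpolation inequality: setting $\psi(\sigma)=\Psi(2+\sigma,2-\sigma)$ and using positivity of the $\lambda$'s, Hadamard's three-lines theorem gives $\Psi(2,2)=\psi(0)\le\sqrt{\psi(-1)\psi(1)}=\Psi(3,1)$ (in this discrete setting one can see it directly from $\lambda_\theta^2\lambda_\phi^2\le\tfrac12(\lambda_\theta^3\lambda_\phi+\lambda_\theta\lambda_\phi^3)$ and the $\theta\leftrightarrow\phi$ symmetry). Without this inequality the ``off-diagonal $\theta\ne\phi$ terms'' you worry about are never actually disposed of.

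Second, and more seriously, your claim that the kite energy ``splits as a product of a rigid factor and a pendant factor'' is false as stated: the pendant vertex $Z$ attaches at $X$, which is simultaneously a vertex of the rigid core and the base point of the rigid motion, so the count does not factor, and no black-box combination of the triangle bound of \cite{BHIPR17} with the sphere-average estimate of \cite{IJM2021} applies. This coupling is precisely where the paper does its work: it fixes the pendant distance $a$, absorbs the pendant edge into the weight $f(x)=E(x)\cdot(E\ast S_a)(x)$, introduces the weighted pair-count $\alpha_\theta(w)=\sum_{x-\theta x'=w}f(x)f(x')$, and bounds $\sum_{\theta,w}\lambda_\theta^3(w)\alpha_\theta(w)$ via a three-term mean-subtraction decomposition. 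That requires genuinely new estimates not present in your plan, notably $\|f\|_{L^{4/3}}\lesssim q^{-1}|E|^{7/4}$ by Riesz--Thorin between the known $L^1$ and $L^2$ norms of $f$, a recursive sphere-restriction inequality for $\widehat f$ parallel to the one for $\widehat E$, and the $L^\infty$ bound $\alpha_\theta\lesssim|E|^2$; only then does one sum over $a$. Relatedly, your error term $|E|^3q^6$ is reverse-engineered from the known exponent rather than derived: it balances against $|E|^{10}/q^6$ at $|E|=q^{12/7}$, but so does the paper's actual dominant error $|E|^{13/2}$ (coming from the term $I$, i.e., the product of the $\lambda$- and $\alpha$-fluctuation estimates, summed in $a$), and nothing in your outline produces either quantity. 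As it stands the proposal is a correct reading of the theorem's architecture with the two load-bearing estimates left unproved.
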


\begin{remark}
The condition that $q=p^n$ for $n$ odd and $p\equiv 3 \ \text{mod} \ 4$ was chosen precisely so that $x^2+1=0$ has no solutions in $\mathbb{F}_q$.  This ensures that there are no non-trivial circles of radius zero.  Indeed, if nonzero $x=(x_1,x_2)\in \mathbb{F}_q^2$ satisfies $||x||=0$, then $x_1^2+x_2^2=0$.  Assuming, without loss of generality, that $x_2\neq 0$, it follows that $\frac{x_1}{x_2}$ is a solution to $x^2+1=0$.

\end{remark}

\section{The generalized distance set for the triangle}\label{s:triangle}

\noindent For a subset $E\subseteq \mathbb{F}_q^2$, let $\Delta_T(E)$ denote the generalized distance set of $E$ corresponding to a triangle, i.e.
$$
\Delta_T(E):=\left\{t=(t_1,t_2,t_3)\in \mathbb{F}_q^3: \ \text{For some} \ x,y,z\in E, \ ||x-y||=t_1, \ ||x-z||=t_2, \ ||y-z||=t_3\right\}
$$
If $\Delta_T(E)=\mathbb{F}_q^3$, this means that every possible congruence class of triangles is realized by vertices in $E$.  We will investigate how large $E$ must be to ensure a positive proportion of all possible congruence classes, i.e. 
$$
\Delta_T(E)\gtrsim q^3.
$$
This has already been studied by \cite{BHIPR17}, obtaining the following result:
\begin{theorem}\label{BHIPR}[Bennett, Hart, Iosevich, Pakianathan, and Rudnev (2017)]
If $|E|\geq Cq^{\frac{8}{5}}$, then $\Delta_T(E)\geq cq^3$.
\end{theorem}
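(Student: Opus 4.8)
\noindent The plan is to run the standard second-moment (energy) argument and then isolate the place where the two-dimensional structure is needed. Write
\[
\nu_T(\vec t)=\sum_{x,y,z\in E} S_{t_1}(x-y)\,S_{t_2}(x-z)\,S_{t_3}(y-z),
\]
so that $\nu_T$ is supported exactly on $\Delta_T(E)$ and $\sum_{\vec t}\nu_T(\vec t)=|E|^3$. By Cauchy--Schwarz,
\[
|E|^{6}=\Big(\sum_{\vec t\in\Delta_T(E)}\nu_T(\vec t)\Big)^{2}\le |\Delta_T(E)|\cdot\sum_{\vec t}\nu_T(\vec t)^{2},
\]
so it suffices to prove the triangle-energy bound $T_3:=\sum_{\vec t}\nu_T(\vec t)^2\lesssim |E|^{6}/q^{3}$ under the hypothesis $|E|\ge Cq^{8/5}$. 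Geometrically, $T_3$ counts ordered pairs of triangles in $E$ sharing all three side lengths, i.e.\ pairs of congruent triangles.

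\noindent The next step exploits rigidity. Three side lengths determine a non-degenerate triangle up to an element of the isometry group $\mathrm{Isom}=O_2(\mathbb{F}_q)\ltimes\mathbb{F}_q^{2}$ (the finite-field side--side--side principle), and for a non-degenerate triangle this element is unique. Setting $\lambda_E(g)=\#\{x\in E:\,gx\in E\}=\sum_x E(x)E(gx)$, the three vertex conditions $x,gx\in E$, $y,gy\in E$, $z,gz\in E$ become independent once $g$ is fixed, so up to the contribution of degenerate (collinear or coincident) triangles,
\[
T_3\ \approx\ \sum_{g\in\mathrm{Isom}}\lambda_E(g)^{3}.
\]
Since $|\mathrm{Isom}|\asymp q^{3}$ and the average value of $\lambda_E$ is $|E|^{2}/q^{2}$, the generic term contributes $q^{3}\cdot(|E|^{2}/q^{2})^{3}=|E|^{6}/q^{3}$, which is exactly the main term we are after. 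The identity element contributes $\lambda_E(\mathrm{id})^{3}=|E|^{3}$, and $|E|^{3}\lesssim |E|^{6}/q^{3}$ precisely because $|E|\ge q^{8/5}\gg q$; the collinear configurations are lower order and are disposed of separately.

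\noindent The heart of the matter is therefore the off-diagonal estimate $\sum_{g\ne\mathrm{id}}\lambda_E(g)^{3}\lesssim |E|^{6}/q^{3}$. Expanding the side-length indicators as additive characters $[\,\|a\|=\|b\|\,]=q^{-1}\sum_{s}\chi\!\big(s(\|a\|-\|b\|)\big)$ turns the error into character sums whose decay is governed by the Fourier transform of spheres, $|\widehat{S_t}(\xi)|\lesssim q^{1/2}$; the terms with exactly one frequency nonzero collapse to the classical distance energy $\sum_t\nu_2(t)^2$, with $\nu_2(t)=\#\{(x,y)\in E^2:\|x-y\|=t\}$, and the fully off-diagonal terms to higher energies. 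Carried out bluntly, this reproduces only the exponent $5/3$ already available from the complete-graph bound quoted above. To reach $8/5$ one has to feed in a genuinely planar, incidence-type refinement --- controlling $\lambda_E(g)$ for non-identity $g$ via point--circle incidences and using that the planar rotation group $SO_2(\mathbb{F}_q)$ is abelian of order $\asymp q$ and acts with strong mixing.

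\noindent The main obstacle is exactly this last estimate: beating the trivial $5/3$ threshold requires more than the pointwise Gauss-sum bound on $\widehat{S_t}$ and must exploit the finer structure of congruent-pair counts in $\mathbb{F}_q^2$. Everything else --- the Cauchy--Schwarz reduction, the group-action rewriting of $T_3$, and the removal of the identity and degenerate contributions --- is routine once the sharp triangle-energy bound is in hand.
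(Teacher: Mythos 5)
Your reduction is exactly the one the paper uses: the Cauchy--Schwarz step $|E|^6\le |\Delta_T(E)|\sum_{\vec t}\nu_T(\vec t)^2$, followed by the rigidity observation that congruent nondegenerate triangles are related by a common isometry, giving $\sum_{\vec t}\nu_T(\vec t)^2\lesssim \sum_{\theta\in O(\mathbb{F}_q^2)}\sum_{w}\lambda_\theta(w)^3$ (your $\lambda_E(g)$ with $g=(\theta,w)$ is precisely the paper's $\lambda_\theta(w)$). But at the decisive point you stop: you state that a blunt Fourier expansion only recovers the exponent $5/3$, that reaching $8/5$ requires ``a genuinely planar, incidence-type refinement,'' and that this estimate is ``the main obstacle'' --- and you never prove it. A proof whose core estimate is identified as the hard part and then left unproven has a genuine gap; everything you do carry out (the reduction and the bookkeeping around the identity and degenerate terms) is, as you say yourself, routine.

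What fills the gap in the paper is Lemma \ref{bound1}: $\sum_{\theta,w}\bigl(\lambda_\theta(w)-\frac{|E|^2}{q^2}\bigr)^2\lesssim q|E|^{5/2}$, together with $\sum_{\theta,w}\lambda_\theta(w)^2\lesssim q|E|^{5/2}+|E|^4/q$. Combined with the trivial bound $\lambda_\theta(w)\le|E|$, this gives $\sum_{\theta,w}\lambda_\theta(w)^3\lesssim q|E|^{7/2}+q^{-1}|E|^{9/2}+q^{-3}|E|^6\lesssim |E|^6/q^3$ exactly when $|E|\gtrsim q^{8/5}$, which is where the threshold comes from. The mechanism is not point--circle incidences or mixing of $SO_2(\mathbb{F}_q)$; it is a Fourier bootstrap. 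One computes $\widehat{\gamma_\theta}(m)=q^2\hat E(m)\hat E(\theta^{-1}m)$ for the mean-zero part $\gamma_\theta=\lambda_\theta-\frac{|E|^2}{q^2}$, observes that summing over $\theta$ replaces $|\hat E(\theta^{-1}m)|^2$ by a sum of $|\hat E(\ell)|^2$ over the sphere $S_t$ with $t=\|m\|$, and bounds that spherical restriction sum by H\"older against $\|\widehat{gS_t}\|_{L^4}$, where $g=\overline{\hat E}$. The $L^4$ norm is in turn controlled by the additive energy of the circle: any solution of $m+\ell=m'+\ell'$ with all four points on $S_t$ forces $\{m,\ell\}=\{m',\ell'\}$ or $m+\ell=0$. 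This yields the recursive inequality $\|gS_t\|_{L^2}^2\lesssim q^{-3/2}|E|^{3/4}\|gS_t\|_{L^2}$, hence $\sum_\theta|\hat E(\theta^{-1}m)|^2\lesssim q^{-3}|E|^{3/2}$, and the lemma follows after summing in $m$ using $\sum_m|\hat E(m)|^2=q^{-2}|E|$. This circle-energy bootstrap is the planar input your sketch is missing; without it (or an equivalent), your argument establishes nothing beyond the $5/3$ threshold you already concede.
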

\vskip.125in
We present a slightly modified version of their proof for two reasons:  The approach via group actions is similar to how we will begin our proof of Theorem \ref{mainfieldthm}, and also many of the lemmas from this section will be needed in the proof of Theorem \ref{mainfieldthm}.
\vskip.125in
\begin{remark}
For $\nu_G$ as defined in Section \ref{s:intro}, for any $t\in \mathbb{F}_q^3$, we have
$$
\nu_T(t):=\left|\left\{(x,y,z)\in E^3: ||x-y||=t_1, \ ||x-z||=t_2, \ ||y-z||=t_3\right\}\right|
$$
\end{remark}

Since any tuple $(x,y,z)\in E^3$ will be realized by some $t\in \mathbb{F}_q^3$, we see that
$$
|E|^3=\sum_{t\in \mathbb{F}_q^3}\nu_T(t)
$$
Squaring, and applying Cauchy-Schwarz, we get
$$
|E|^6\leq |\Delta_T(E)|\sum_{t\in \mathbb{F}_q^3}\nu_T(t)^2,
$$
since $\Delta_T(E)$ is the set of $t\in \mathbb{F}_q^3$ for which $\nu_T(t)$ is non-zero.  Thus, finding a lower bound for $|\Delta_T(E)|$ is reduced to finding an upper bound for $||\nu_T||_{L^2}$. To this end, we express $||\nu_T||_{L^2}$ as a sum over the orthogonal group, taking advantage of the fact that distances are characterized by the action of the orthogonal group on $\mathbb{F}_q^2$.  
\begin{lemma}\label{group action}
$$
\sum_{t\in \mathbb{F}_q^3}\nu_T(t)^2
\lesssim\sum_{\theta\in O(\mathbb{F}_q^2)}\sum_{w\in \mathbb{F}_q^2}\lambda_\theta(w)^3,
$$
where $O(\mathbb{F}_q^2)$ is the orthogonal group, and 
$$
\lambda_{\theta}(w):=|\{(u,v)\in E^2: u-\theta v=w\}|.
$$
\end{lemma}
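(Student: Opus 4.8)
The plan is to interpret both sides combinatorially and match them through the rigidity of triangles under the orthogonal group. First I would expand the left-hand side: by the remark preceding the lemma, $\sum_{t}\nu_T(t)^2$ counts sextuples $(x,y,z,x',y',z')\in E^6$ for which the two triangles have equal corresponding edge lengths, that is, $\norm{x-y}=\norm{x'-y'}$, $\norm{x-z}=\norm{x'-z'}$, and $\norm{y-z}=\norm{y'-z'}$. The goal is then to show that each such ``congruent'' sextuple is accounted for, a bounded number of times, by the right-hand side.

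The key geometric input is that equal edge lengths force the two triangles to be related by a rigid motion. Writing $a=x-y$ and $b=x-z$ (so that $y-z=b-a$), the polarization identity $\norm{b-a}=\norm{a}-2\inner{a}{b}+\norm{b}$ shows that the three edge lengths determine the full Gram matrix of the pair $(a,b)$, and likewise for $(a',b')=(x'-y',x'-z')$. Hence the two pairs have the same Gram matrix, and by Witt's extension theorem for the nondegenerate form $\norm{\cdot}$ there exists $\theta\in O(\mathbb{F}_q^2)$ with $\theta a=a'$ and $\theta b=b'$. Setting $w=x'-\theta x$ and using $\theta(x-y)=x'-y'$ together with $\theta(x-z)=x'-z'$, one checks that simultaneously $x'=\theta x+w$, $y'=\theta y+w$, and $z'=\theta z+w$; in other words, the primed triangle is the image of the unprimed one under the rigid motion $p\mapsto \theta p+w$.

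With this correspondence in hand, I would reorganize the count by the pair $(\theta,w)$. For fixed $\theta$ and $w$, each vertex pair $(x',x)$, $(y',y)$, $(z',z)$ must individually satisfy $x'-\theta x=w$, i.e.\ each is a pair $(u,v)\in E^2$ with $u-\theta v=w$, and there are exactly $\lambda_\theta(w)$ such pairs. Choosing the three vertex pairs independently therefore yields $\lambda_\theta(w)^3$ sextuples associated to $(\theta,w)$. Since every congruent sextuple arises from at least one admissible $(\theta,w)$, summing over $\theta\in O(\mathbb{F}_q^2)$ and $w\in\mathbb{F}_q^2$ gives $\sum_t \nu_T(t)^2 \lesssim \sum_\theta\sum_w \lambda_\theta(w)^3$, which is the claim.

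The step I expect to be the main obstacle is the rigidity claim for \emph{degenerate} configurations. When $a$ and $b$ are linearly independent, $\theta$ is uniquely determined and the correspondence is a genuine bijection; but when the triangle is degenerate (collinear vertices, coincident vertices, or, absent the number-theoretic hypotheses of Theorem \ref{mainfieldthm}, isotropic difference vectors), $\theta$ need not be unique, and Witt's theorem must be applied on the appropriate subspace to guarantee that at least one such $\theta$ still exists. This existence is exactly what the inequality $\textbf{LHS}\le\textbf{RHS}$ requires. The passage to $\lesssim$ rather than equality comfortably absorbs any bounded overcounting: I would note that the stabilizer in $O(\mathbb{F}_q^2)$ of a single nonzero vector has bounded size, so a degenerate sextuple is produced by only $O(1)$ pairs $(\theta,w)$ and such contributions do not affect the stated bound.
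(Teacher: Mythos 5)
Your proposal follows essentially the same route as the paper's proof: expand $\sum_t \nu_T(t)^2$ as the number of sextuples $(x,y,z,x',y',z')\in E^6$ whose corresponding edge lengths agree, use rigidity to produce a single $\theta\in O(\mathbb{F}_q^2)$ and $w\in\mathbb{F}_q^2$ with all three vertex pairs satisfying the same relation $u-\theta v=w$ (you run the rigid motion from unprimed to primed points, the paper runs it the other way; this is immaterial), and then observe that for fixed $(\theta,w)$ such triples of pairs are counted by $\lambda_\theta(w)^3$. Your use of polarization plus Witt's extension theorem makes explicit the rigidity step that the paper merely asserts, and your closing remark about overcounting, while not needed for this direction of the inequality (each sextuple need only be hit at least once), is harmless.

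The one claim you should retract is that Witt's theorem guarantees existence of $\theta$ for configurations with isotropic difference vectors. It does not, and no argument can: if $a=x-y$ is a nonzero isotropic vector, $b=x-z=ca$, while on the primed side $a'=x'-y'$ is isotropic and $b'=x'-z'=(c+\epsilon)a'$ with $\epsilon\neq 0$, then both configurations have all edge lengths zero and identical (zero) Gram matrices, yet any $\theta$ with $\theta a=a'$ forces $\theta b=ca'\neq b'$ by linearity; the map you would hand to Witt's theorem is not even well defined on the subspace. The failure is quantitative, not just formal: when $q\equiv 1\pmod 4$ and $E$ is a line of isotropic vectors, so $|E|=q$, one has $\sum_t\nu_T(t)^2=\nu_T(0,0,0)^2=q^6$, whereas $\sum_{\theta,w}\lambda_\theta(w)^3\approx q^5$, since $\lambda_\theta$ takes the value $q$ on the $q$ points of a line when $\theta E=E$ and is identically $1$ when $\theta E$ is the other isotropic line. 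So the lemma is genuinely false without the standing hypothesis (see the Remark following Theorem \ref{mainfieldthm}) that $x^2+1=0$ has no root in $\mathbb{F}_q$. Under that hypothesis your argument does close: $t_i=0$ forces coincident vertices, and in the collinear case $b=ca$ one checks that $b'-ca'$ is isotropic and orthogonal to the anisotropic vector $a'$, hence zero, after which the subspace isometry $a\mapsto a'$ is well defined and extends. To be fair, the paper is equally terse here---it attributes the inequality to ``the case when one or more of the components of $t$ are zero'' without verifying that those sextuples are dominated---so your write-up matches it in both approach and rigor; just delete the suggestion that the isotropic case can be rescued.
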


\begin{proof}
$$
\nu_T(t)=\sum_{x,y,z\in \mathbb{F}_q^d}E(x)E(y)E(z)S_{t_1}(x-y)S_{t_2}(x-z)S_{t_3}(y-z), 
$$
and so

$$
\sum_{t\in \mathbb{F}_q^3}\nu_T(t)^2
=\sum_t\sum_{\substack{x,y,z\in E \\ x',y',z'\in E}}S_{t_1}(x-y)S_{t_2}(x-z)S_{t_3}(y-z)S_{t_1}(x'-y')S_{t_2}(x'-z')S_{t_3}(y'-z')
$$
$$
=\left|\left\{(x,y,z,x',y',z')\in E^6:||x-y||=||x'-y'||, \  ||x-z||=||x'-z'||, \  ||y-z||=||y'-z'||\right\}\right|
$$
Note that $||x-y||=||x'-y'||$ if and only if there is an orthogonal transformation $\theta$ such that $x-y=\theta(x'-y')$, or equivalently, $x-\theta x'=y-\theta y'$.  The choice of $\theta$ is unique up to reflection, as long as the lengths are nonzero.  We can make this same observation for $||x-z||=||x'-z'||$ and for $||y-z||=||y'-z'||$.  Moreover, the same choice of $\theta$ will work for all three cases, since $(x,y,z)$ and $(x',y',z')$ form congruent triangles, meaning there is an orthogonal transformation mapping $x-y\mapsto x'-y'$, $x-z\mapsto x'-z'$, and $y-z\mapsto y'-z'$.  Thus,
$$
\sum_{t\in \mathbb{F}_q^3}\nu_T(t)^2
\leq\sum_{\theta\in O(\mathbb{F}_q^2)}\sum_{w\in \mathbb{F}_q^2}\lambda_\theta(w)^3,
$$
Where 
$$
\lambda_{\theta}(w)=\left|\left\{u,v\in E: u-\theta v=w\right\}\right|.
$$
We have an inequality instead of an identity because of the case when one or more of the components of $t$ are zero.  
\end{proof}

Thus, Theorem \ref{BHIPR} has been reduced to finding an appropriate upper bound for $\sum_{\theta,w}\lambda_{\theta}(w)^3$.  
Note that $|\lambda_{\theta}(w)|\leq |E|$ everywhere, since once $\theta,v,w$ are fixed, $u=w+\theta v$ is determined.  Therefore,
$$
\sum_{\theta,w}\lambda_{\theta}(w)^3
=\sum_{\theta,w}\lambda_{\theta}(w)\left(\lambda_{\theta}(w)-\frac{|E|^2}{q^2}\right)^2
+2\frac{|E|^2}{q^2}\sum_{\theta,w}\lambda_{\theta}(w)^2
-\frac{|E|^4}{q^4}\sum_{\theta,w}\lambda_{\theta}(w)
$$
$$
\lesssim|E|\sum_{\theta,w}\left(\lambda_{\theta}(w)-\frac{|E|^2}{q^2}\right)^2+\frac{|E|^2}{q^2}\sum_{\theta,w}\lambda_{\theta}(w)^2
$$
\vskip.125in
In this calculation, we have subtracted the 0-th Fourier coefficient, 
$$
\widehat{\lambda_{\theta}}(0)=\frac{|E|^2}{q^2}.
$$
We can bound both of these terms in essentially the same way.
\vskip.125in

\begin{lemma}\label{bound1}
$$
\sum_{\theta,w}\left(\lambda_{\theta}(w)-\frac{|E|^2}{q^2}\right)^2\lesssim q|E|^{\frac{5}{2}}, 
\ \ \ 
\text{and}
\ \ \ 
\sum_{\theta,w}\lambda_{\theta}(w)^2\lesssim q|E|^{\frac{5}{2}}+\frac{|E|^4}{q}
$$
\end{lemma}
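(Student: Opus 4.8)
The two estimates differ only by the mean of $\lambda_\theta$, so the plan is to prove the left-hand (variance) bound and to read off the right-hand one from it. Writing $c=\frac{|E|^2}{q^2}=\widehat{\lambda_\theta}(0)$ and expanding $\lambda_\theta(w)^2=\left(\lambda_\theta(w)-c\right)^2+2c\,\lambda_\theta(w)-c^2$, then summing over $w$ and $\theta$ using $\sum_w\lambda_\theta(w)=|E|^2$ for each $\theta$ together with the count of $|O(\mathbb{F}_q^2)|\,q^2$ pairs $(\theta,w)$, the cross terms collapse to the \emph{exact} identity
$$
\sum_{\theta,w}\lambda_\theta(w)^2=\sum_{\theta,w}\Big(\lambda_\theta(w)-\frac{|E|^2}{q^2}\Big)^2+|O(\mathbb{F}_q^2)|\,\frac{|E|^4}{q^2}.
$$
Since $|O(\mathbb{F}_q^2)|=2(q+1)$, the last term is $\asymp \frac{|E|^4}{q}$, so it is precisely the mean contribution, and it suffices to establish $\sum_{\theta,w}\big(\lambda_\theta(w)-\frac{|E|^2}{q^2}\big)^2\lesssim q|E|^{5/2}$.

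Next I would expand $\sum_{\theta,w}\lambda_\theta(w)^2$ into a count of quadruples exactly as in the proof of Lemma \ref{group action}, giving $\big|\{(u,u',v,v',\theta): u-u'=\theta(v-v')\}\big|$. Under the standing hypotheses there are no nonzero isotropic vectors, so the degenerate quadruples are exactly those with $u=u'$ and $v=v'$ (any $\theta$ works), while a nondegenerate quadruple contributes iff $\|u-u'\|=\|v-v'\|\neq 0$, and then for exactly two $\theta$ (by Witt's theorem the stabilizer of a nonzero vector in $O(\mathbb{F}_q^2)$ has order two). Hence
$$
\sum_{\theta,w}\lambda_\theta(w)^2=|O(\mathbb{F}_q^2)|\,|E|^2+2\sum_{s\neq 0}\nu(s)^2,\qquad \nu(s):=\big|\{(x,y)\in E^2:\|x-y\|=s\}\big|.
$$
Combining this with the previous paragraph, and using $|O(\mathbb{F}_q^2)|\,|E|^2\asymp q|E|^2\lesssim q|E|^{5/2}$, both inequalities of Lemma \ref{bound1} become equivalent to the planar distance-energy estimate
$$
\sum_{s\neq 0}\nu(s)^2\;\le\;\frac{|E|^4}{q}+C\,q|E|^{5/2},
$$
the main term $\frac{|E|^4}{q}$ being automatically removed when one passes to the variance.

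To prove this energy estimate I would move to the Fourier side. Writing $S_t(w)=\frac1q\sum_{j}\chi\big(j(\|w\|-t)\big)$ for a nontrivial additive character $\chi$, and setting $G_j=\sum_{x,y\in E}\chi(j\|x-y\|)$ with $G_0=|E|^2$, one gets $\nu(t)=\frac1q\sum_{j}\chi(-jt)\,G_j$, and orthogonality in $t$ yields the exact identity
$$
\sum_{t}\nu(t)^2=\frac{|E|^4}{q}+\frac1q\sum_{j\neq 0}|G_j|^2 .
$$
Thus the main term is isolated and the whole problem reduces to the single bound $\sum_{j\neq 0}|G_j|^2\lesssim q^2|E|^{5/2}$.

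This last step is the main obstacle, and it genuinely requires cancellation. The trivial estimate $|G_j|\le q|E|$ (Cauchy--Schwarz, using $\big\|\sum_y E(y)\chi(j\|\cdot-y\|)\big\|_2^2=q^2|E|$) only gives $\sum_{j\neq0}|G_j|^2\lesssim q^3|E|^2$, hence a variance bound of size $q^2|E|^2$; and even the pointwise circle-decay estimate $\nu(t)=\frac{|E|^2}{q}+O(q^{1/2}|E|)$ fed through a maximum-times-sum argument only gives $q^{1/2}|E|^3$. Both exceed the target $q|E|^{5/2}$ throughout the relevant range $q<|E|<q^2$. The sharp bound must instead exploit cancellation in the sum over $j$, i.e.\ the $L^2$-averaged Fourier decay of the circle $S_t\subset\mathbb{F}_q^2$ --- a Sali\'e/Kloosterman sum, where the hypotheses $p\equiv 3\bmod 4$ and $n$ odd (equivalently $-1$ a nonsquare, no nonzero isotropic vectors) guarantee the clean evaluation --- equivalently the sharp $L^2\to L^4$ extension estimate for the circle. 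This is exactly what produces the exponent $\frac{5}{2}$, and it is the only nontrivial analytic input; the rest is bookkeeping around Lemma \ref{group action} and the mean-subtraction identity above.
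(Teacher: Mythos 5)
Your reductions are all correct, and in places cleaner than the paper's: the mean-subtraction identity, the quadruple count via Witt's theorem (valid here because the hypotheses $p\equiv 3 \bmod 4$, $n$ odd rule out nonzero isotropic vectors, so the stabilizer of a nonzero vector in $O(\mathbb{F}_q^2)$ really does have order two), and the orthogonality identity $\sum_t\nu(t)^2=\frac{|E|^4}{q}+\frac1q\sum_{j\neq0}|G_j|^2$ are all exact. What you have shown, then, is that Lemma \ref{bound1} is \emph{equivalent} to the estimate $\sum_{j\neq0}|G_j|^2\lesssim q^2|E|^{5/2}$. The problem is that you then stop: you name the required input (``the sharp $L^2\to L^4$ extension estimate for the circle,'' Sali\'e/Kloosterman cancellation) but neither prove it nor invoke a precise citable statement of it. Since that estimate is, modulo your own bookkeeping, a restatement of the lemma itself, the proposal reduces the lemma to an equivalent unproven assertion. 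That is the genuine gap: the exponent $\frac52$ is never actually produced anywhere in your argument.

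For comparison, the paper's proof consists precisely of establishing that missing input, and it does so without exponential sums over $\mathbb{F}_q$. Setting $\gamma_\theta=\lambda_\theta-\frac{|E|^2}{q^2}$, one has $\widehat{\gamma_\theta}(m)=q^2\hat{E}(m)\hat{E}(\theta^{-1}m)$ for $m\neq0$, so everything hinges on the circle-restriction bound $\sum_{||\ell||=t}|\hat{E}(\ell)|^2\lesssim q^{-3}|E|^{3/2}$. This is proved by a bootstrap: with $g=\overline{\hat{E}}$, Plancherel gives $\sum_{||\ell||=t}|\hat{E}(\ell)|^2=\sum_\ell E(\ell)\widehat{gS_t}(\ell)\leq |E|^{3/4}\Vert\widehat{gS_t}\Vert_{L^4}$ by H\"older, and then $\Vert\widehat{gS_t}\Vert_{L^4}^4\lesssim q^{-6}\Vert gS_t\Vert_{L^2}^4$ because $m+\ell=m'+\ell'$ on $S_t$ forces $\{m,\ell\}=\{m',\ell'\}$ or $m+\ell=0$ (again using anisotropy of the sphere). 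This yields the self-improving inequality $\Vert gS_t\Vert_{L^2}^2\lesssim q^{-3/2}|E|^{3/4}\Vert gS_t\Vert_{L^2}$, hence $\Vert gS_t\Vert_{L^2}\lesssim q^{-3/2}|E|^{3/4}$, and the lemma follows upon combining with $\sum_m|\hat{E}(m)|^2=q^{-2}|E|$. If you supplied this bootstrap (or an explicit reference proving your bound on $\sum_{j\neq0}|G_j|^2$), your argument would be a complete, and genuinely different-looking, route to the same lemma; as written, the analytic core is missing.
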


\vskip.125in

\begin{corollary}\label{Triangle_corollary}
If $|E|\geq q^{\frac{8}{5}}$, then
$$
\sum_{\theta,w}\lambda_{\theta}(w)^3
\lesssim \frac{|E|^6}{q^3},
$$
and thus the conclusion of Theorem \ref{BHIPR} follows.
    
\end{corollary}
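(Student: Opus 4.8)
The plan is to substitute the two estimates from Lemma \ref{bound1} directly into the decomposition of $\sum_{\theta,w}\lambda_\theta(w)^3$ established just before the corollary, and then read off the threshold on $|E|$ at which all resulting error terms are dominated by the target $|E|^6/q^3$. This is essentially a bookkeeping exercise, so there is no serious obstacle; the only subtlety is tracking which of the resulting terms is binding.

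First I would plug in the bounds. From Lemma \ref{bound1}, the first term satisfies
$$
|E|\sum_{\theta,w}\left(\lambda_\theta(w)-\frac{|E|^2}{q^2}\right)^2 \lesssim |E|\cdot q|E|^{5/2}=q|E|^{7/2},
$$
while the second satisfies
$$
\frac{|E|^2}{q^2}\sum_{\theta,w}\lambda_\theta(w)^2 \lesssim \frac{|E|^2}{q^2}\left(q|E|^{5/2}+\frac{|E|^4}{q}\right)=\frac{|E|^{9/2}}{q}+\frac{|E|^6}{q^3}.
$$
Combining, I obtain
$$
\sum_{\theta,w}\lambda_\theta(w)^3 \lesssim q|E|^{7/2}+\frac{|E|^{9/2}}{q}+\frac{|E|^6}{q^3}.
$$

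Next I would compare each of the first two terms against the target $|E|^6/q^3$. The term $q|E|^{7/2}$ is dominated by $|E|^6/q^3$ precisely when $q^4\lesssim|E|^{5/2}$, i.e. $|E|\gtrsim q^{8/5}$, and the term $|E|^{9/2}/q$ is dominated when $q^2\lesssim|E|^{3/2}$, i.e. $|E|\gtrsim q^{4/3}$. Since $q^{8/5}\ge q^{4/3}$, the hypothesis $|E|\ge q^{8/5}$ secures both inequalities simultaneously, so under this assumption $\sum_{\theta,w}\lambda_\theta(w)^3 \lesssim |E|^6/q^3$, which is the claimed estimate. I would remark that the term $q|E|^{7/2}$, coming from the $L^2$ concentration bound, is the binding constraint, which is exactly what forces the $8/5$ exponent for this method.

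Finally, to recover the conclusion of Theorem \ref{BHIPR}, I would chain together the Cauchy--Schwarz inequality $|E|^6\le|\Delta_T(E)|\sum_t\nu_T(t)^2$ from the start of this section, the reduction $\sum_t\nu_T(t)^2\lesssim\sum_{\theta,w}\lambda_\theta(w)^3$ of Lemma \ref{group action}, and the bound just proved. This yields
$$
|E|^6 \lesssim |\Delta_T(E)|\cdot\frac{|E|^6}{q^3},
$$
and dividing through gives $|\Delta_T(E)|\gtrsim q^3$, as desired.
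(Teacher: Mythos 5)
Your proposal is correct and follows exactly the route the paper intends: substituting the two estimates of Lemma \ref{bound1} into the decomposition of $\sum_{\theta,w}\lambda_{\theta}(w)^3$ displayed just before the corollary, checking that $q|E|^{7/2}$ is the binding term (which is what produces the exponent $\frac{8}{5}$), and then chaining Cauchy--Schwarz with Lemma \ref{group action} to conclude $|\Delta_T(E)|\gtrsim q^3$. The arithmetic and the thresholds $q^{8/5}$ and $q^{4/3}$ are all verified correctly.
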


\vskip.125in
We now prove the Lemma, completing our analysis of the generalized distance set corresponding to the triangle.

\begin{proof}

Let $\gamma_{\theta}:=\lambda_{\theta}-\frac{|E|^2}{q^2}$, so that $\widehat{\gamma_{\theta}}(0)=0$, and for $m\neq 0$,

$$
\widehat{\gamma_{\theta}}(m)=\widehat{\lambda_{\theta}}(m)
=q^{-2}\sum_{w\in \mathbb{F}_q^2}\chi(-m\cdot w)\lambda_{\theta}(w)
=q^{-2}\sum_{u,v\in \mathbb{F}_q^2}\chi(-m\cdot (u-\theta v))E(u)E(v)
$$
$$
=q^{2}\hat{E}(m)\hat{E}(\theta^{-1}m).
$$
Thus,
$$
\sum_{\theta,m}|\widehat{\gamma_{\theta}}(m)|^2
=q^{4}\sum_{\theta,m}|\hat{E}(m)|^2|\hat{E}(\theta^{-1}m)|^2
$$
Summing first in $\theta$, let $t=||m||$ and $g(\ell)=\overline{\hat{E}}$, so that
$$
\sum_{\theta}|\hat{E}(\theta^{-1}m)|^2
=2\sum_{||\ell||=t}|\hat{E}(\ell)|^2
=2\sum_{\ell}\hat{E}(\ell)S_t(\ell)g(\ell)
=2\sum_{\ell}E(\ell)\widehat{gS_t}(\ell)
$$
$$
\lesssim |E|^{\frac{3}{4}}||\widehat{gS_t}||_{L^4}.
$$
We can bound this quantity using the fact that whenever $m+\ell=m'+\ell'$, for $m,m',\ell,\ell'\in S_t$, one of the following must hold:
\begin{itemize}
\item $m=m'$, $\ell=\ell'$

\item $m=\ell'$, $\ell=m'$

\item $m+\ell=m'+\ell'=0$
    
\end{itemize}
Thus,
$$
||\widehat{gS_t}||_{L^4}^4
=q^{-8}\sum_{\xi}\left|\sum_{m}\chi(-m\cdot \xi)g(m)S_t(m)\right|^4
$$
$$
=q^{-8}\sum_{m,m',\ell,\ell'\in S_t}\sum_{\xi}\chi(-\xi\cdot (m+\ell-m'-\ell'))g(m)g(\ell)\overline{g(m')}\overline{g(\ell')}
$$
$$
=q^{-6}\sum_{\substack{m,m',\ell,\ell'\in S_t \\ m+\ell=m'+\ell'}}g(m)g(\ell)\overline{g(m')}\overline{g(\ell')}
$$
$$
\lesssim q^{-6}||gS_t||_{L^2}^4
$$
The last inequality follows by splitting the sum into the 3 cases listed above.  In the case $m=m'$, $\ell=\ell'$, as well as in the case $m=\ell'$, $\ell=m'$, this is trivial.  In the case $m+\ell=m'+\ell'=0$, this follows from Cauchy-Schwarz.  This gives us a recursive inequality,
$$
\sum_{\theta}|\hat{E}(\theta^{-1}m)|^2
=2||gS_t||_{L^2}^2
\lesssim q^{-\frac{3}{2}}|E|^{\frac{3}{4}}||gS_t||_{L^2},
$$
and thus,
$$
||gS_t||_{L^2}\lesssim q^{-\frac{3}{2}}|E|^{\frac{3}{4}},
$$
so
$$
\sum_{\theta}|\hat{E}(\theta^{-1}m)|^2\lesssim q^{-3}|E|^{\frac{3}{2}}.
$$
Finally, since
$$
\sum_m|\hat{E}(m)|^2=q^{-2}|E|,
$$
it follows that
$$
\sum_{\theta,w}\left(\lambda_{\theta}(w)-\frac{|E|^2}{q^2}\right)^2
=q^2\sum_{\theta,m}|\widehat{\gamma_{\theta}}(m)|^2
=q^6\sum_{\theta,m}|\hat{E}(m)|^2|\hat{E}(\theta^{-1}m)|^2
$$
$$
\lesssim q|E|^{\frac{5}{2}}.
$$
Adding back in the 0-th Fourier coefficient, we see that
$$
\sum_{\theta,w}\lambda_{\theta}(w)^2
=q^2\sum_{\theta,m}|\widehat{\lambda_{\theta}}(m)|^2
=\frac{|E|^4}{q}+q^2\sum_{\theta,m}|\widehat{\gamma_{\theta}}(m)|^2
$$
$$
\lesssim \frac{|E|^4}{q}+q|E|^{\frac{5}{2}}.
$$

\end{proof}

With these techniques in hand, we are ready to move on to discuss the bow tie graph.

\section{Proof of Theorem \ref{mainfieldthm}}
\subsection{Interpolation scheme for generalized distance graphs.}
To understand $\Delta_B(E)$, where $B$ is the bow tie graph as defined in Section \ref{s:intro}, it suffices to understand $\Delta_K(E)$, where $K$ is the kite graph defined below.

\vskip.125in

\begin{definition}
Let $K$ be the ``kite" graph consisting of two triangles sharing an edge, with an extra edge coming from one of the vertices not on the shared edge of the two triangles.
    
\end{definition}

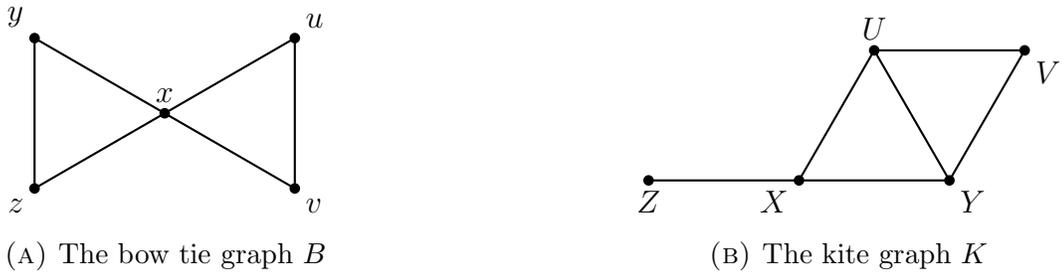
\begin{figure}[h!]
    \centering
    \begin{subfigure}[b]{0.45\textwidth}
        \centering
        \begin{tikzpicture}
            \coordinate (y) at (-1.73,1);
            \coordinate (z) at (-1.73,-1);
            \coordinate (x) at (0,0); 
            \coordinate (u) at (1.73,1);
            \coordinate (v) at (1.73,-1);

            \draw[thick] (y) -- (z) -- (x) -- cycle;

            \draw[thick] (x) -- (u) -- (v) -- cycle;

            \fill (y) circle (2pt);
            \fill (z) circle (2pt);
            \fill (x) circle (2pt);
            \fill (u) circle (2pt);
            \fill (v) circle (2pt);

            \node[above left] at (y) {$y$};
            \node[below left] at (z) {$z$};
            \node[above] at (x) {$x$};
            \node[above right] at (u) {$u$};
            \node[below right] at (v) {$v$};
        \end{tikzpicture}
        \caption{The bow tie graph $B$}
        \label{fig:triangles}
    \end{subfigure}
    \hfill
    \begin{subfigure}[b]{0.45\textwidth}
        \centering
        \begin{tikzpicture}
            \coordinate (X) at (0,0);
            \coordinate (Y) at (2,0);
            \coordinate (Z) at (-2,0);
            \coordinate (U) at (1,1.73);
            \coordinate (V) at (3,1.73);

            \draw[thick] (X) -- (Y);
            \draw[thick] (Y) -- (V);
            \draw[thick] (U) -- (V);
            \draw[thick] (U) -- (X);
            \draw[thick] (X) -- (Z);
            \draw[thick] (U) -- (Y);

            \fill (X) circle (2pt);
            \fill (Y) circle (2pt);
            \fill (Z) circle (2pt);
            \fill (U) circle (2pt);
            \fill (V) circle (2pt);

            \node[below left] at (X) {$X$};
            \node[below right] at (Y) {$Y$};
            \node[below] at (Z) {$Z$};
            \node[above] at (U) {$U$};
            \node[below right] at (V) {$V$};
        \end{tikzpicture}
        \caption{The kite graph $K$}
        \label{fig:graph}
    \end{subfigure}
    \caption{To study $\Delta_B(E)$ for $E\subseteq \mathbb{F}_q^2$, we will instead study $\Delta_K(E)$, obtaining a bound for $|\Delta_B(E)|$ by the Hadamard three-lines theorem.}
    \label{figure_both}
\end{figure}

\vskip.125in

\begin{definition}
For $a,b\in \mathbb{C}$, $E\subseteq \mathbb{F}_q^2$, let
$$
\Psi(a,b):=\sum_{\substack{x,x'\in \mathbb{F}_q^2 \\ \theta,\phi\in O(\mathbb{F}_q^2)}}\lambda_{\theta}(x-\theta x')^a\lambda_{\phi}(x-\phi x')^b.
$$ 
\end{definition}

\vskip.125in

\begin{lemma}
Let $E\subseteq \mathbb{F}_q^2$, and let $\nu_B$ be as defined in Section \ref{s:intro}, for the bow tie graph $B$.  Then,  
$$
\sum_{t\in \mathbb{F}_q^6}\nu_B(t)^2\leq
\Psi(2,2).
$$

\end{lemma}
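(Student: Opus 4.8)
The plan is to run the group-action reduction behind Lemma \ref{group action} on each of the two triangles of the bow tie separately. First I would expand the left-hand side as a count of pairs of congruent bow ties,
$$
\sum_{t\in \mathbb{F}_q^6}\nu_B(t)^2=\left|\left\{(x,y,z,u,v,x',y',z',u',v')\in E^{10}:\ D_B(x,y,z,u,v)=D_B(x',y',z',u',v')\right\}\right|,
$$
where equality of the distance vectors $D_B$ means that the three edge lengths of the first triangle, namely $||x-y||,||x-z||,||y-z||$, and the three of the second, namely $||x-u||,||x-v||,||u-v||$, are matched across the two configurations.

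Next I would apply the argument of Lemma \ref{group action} to each triangle on its own. The three matched distances of the first triangle force a single $\theta\in O(\mathbb{F}_q^2)$ with $x-\theta x'=y-\theta y'=z-\theta z'$; write $w_1$ for this common value. Independently, the second triangle produces a $\phi\in O(\mathbb{F}_q^2)$ with $x-\phi x'=u-\phi u'=v-\phi v'=:w_2$. The essential structural point, and the source of the non-rigidity exploited in this paper, is that the two triangles meet only at the vertex $x$ and share no edge, so $\theta$ and $\phi$ are unrelated to one another; the sole coupling between the two halves is the shared vertex pair $(x,x')$, which enters through $w_1=x-\theta x'$ and $w_2=x-\phi x'$.

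I would then organize the count by fixing the shared pair $(x,x')\in E^2$ together with $\theta,\phi\in O(\mathbb{F}_q^2)$, so that $w_1$ and $w_2$ are determined. Each of the two non-shared vertices $y,z$ of the first triangle may then be chosen in $\lambda_\theta(w_1)$ ways, as the pairs $(y,y')$ and $(z,z')$ range over solutions of $a-\theta a'=w_1$ with $a,a'\in E$, and likewise each of $u,v$ in $\lambda_\phi(w_2)$ ways. Hence the contribution of a fixed $(x,x',\theta,\phi)$ is $\lambda_\theta(x-\theta x')^2\,\lambda_\phi(x-\phi x')^2$, the exponent $2$ recording exactly the two non-shared vertices of each triangle. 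Summing over $(x,x')\in E^2$ and $\theta,\phi$, and then enlarging the $(x,x')$-sum to all of $\mathbb{F}_q^2\times\mathbb{F}_q^2$, which only increases the nonnegative summand, produces precisely $\Psi(2,2)$.

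The one genuinely delicate point is the same as in Lemma \ref{group action}: the passage from equal distances to an orthogonal transformation, and the near-uniqueness of that transformation, can fail when an edge length vanishes. This is exactly where the hypotheses $q=p^n$, $n$ odd, $p\equiv 3\bmod 4$ are used, since they forbid nonzero vectors of zero length and so keep the sphere $S_0$ trivial; these degenerate configurations are what turn the natural identity into the stated inequality. I expect the parametrization itself to be routine bookkeeping, with the independence of $\theta$ and $\phi$ being the single step needing real care, as it is precisely the interpolation between the rigid and non-rigid parts of $B$.
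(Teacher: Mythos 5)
Your proof is correct and follows essentially the same route as the paper: expand $\sum_t\nu_B(t)^2$ as a count of pairs of congruent bow ties, apply the rigidity argument of Lemma \ref{group action} to each triangle separately to obtain independent $\theta,\phi\in O(\mathbb{F}_q^2)$ coupled only through the shared pair $(x,x')$, and absorb the degenerate (zero-length) cases into the inequality. Your write-up simply makes explicit the bookkeeping the paper leaves implicit, namely counting $(y,y'),(z,z'),(u,u'),(v,v')$ via $\lambda_\theta(w_1)^2\lambda_\phi(w_2)^2$ and then enlarging the $(x,x')$-sum from $E^2$ to $\mathbb{F}_q^2\times\mathbb{F}_q^2$.
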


\vskip.125in

\begin{proof}
This is analogous to Lemma \ref{group action}.  
The left-hand sum counts pairs of tuples $$
(x,y,z,u,v),(x',y',z',u',v')\in E^5
$$
whose edges, interpreted as in Figure \ref{figure_both}, correspond to the same $t\in \mathbb{F}_q^6$.  $||x-y||=||x'-y'||$ if and only if there is some $\theta\in O(\mathbb{F}_q^2)$ with $x-\theta x' = y-\theta y'$.  The choice of $\theta$ is unique up to reflection as long as the lengths are nonzero, and because we have the triangle $(x,y,z)$ contained in the graph $B$, it will be the same rotation for all three edges, i.e. there exists one $\theta\in O(\mathbb{F}_q^2)$ with 
$$
x-\theta x'=y-\theta y'=z-\theta z'.
$$
Similarly, there will be one $\phi\in O(\mathbb{F}_q^2)$ with 
$$
x-\phi x'=u-\phi u'=v-\phi v'.
$$
Therefore, 
$$
\sum_{t\in \mathbb{F}_q^6}\nu_B(t)^2
\leq\Psi(2,2),
$$
where once again the inequality comes from the case when one or more components of $t$ are zero.  
\end{proof}
\vskip.125in
We are now ready to use the Hadamard three-lines theorem to reveal how the kite graph $K$ is connected to this.
\vskip.125in
\begin{lemma}
For any $E\subseteq \mathbb{F}_q^2$,
$$
\Psi(2,2)\leq \Psi(3,1).
$$
    
\end{lemma}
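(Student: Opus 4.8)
The plan is to exploit the fact that, for fixed $x,x'$, the summations over $\theta$ and $\phi$ in the definition of $\Psi(a,b)$ decouple completely, so that this two-parameter quantity reduces to a sum over $(x,x')$ of a \emph{product of one-parameter power sums}. The inequality $\Psi(2,2)\le \Psi(3,1)$ then becomes the elementary midpoint log-convexity of power sums — the discrete shadow of the Hadamard three-lines principle referenced above.

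First I would fix $x,x'\in \mathbb{F}_q^2$ and set $c_\theta := \lambda_\theta(x-\theta x')\ge 0$ for each $\theta\in O(\mathbb{F}_q^2)$. Since the summand $\lambda_\theta(x-\theta x')^a\,\lambda_\phi(x-\phi x')^b$ is a product of a function of $\theta$ alone and a function of $\phi$ alone, the sum over the pair $(\theta,\phi)$ factors, giving
$$
\Psi(a,b)=\sum_{x,x'}\Bigl(\sum_\theta c_\theta^{\,a}\Bigr)\Bigl(\sum_\phi c_\phi^{\,b}\Bigr)=\sum_{x,x'}P_a\,P_b,
$$
where $P_r=P_r(x,x'):=\sum_\theta c_\theta^{\,r}$ is the $r$-th power sum of the nonnegative sequence $(c_\theta)_\theta$. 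In particular $\Psi(2,2)=\sum_{x,x'}P_2^{\,2}$ and $\Psi(3,1)=\sum_{x,x'}P_3\,P_1$.

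Next I would establish the pointwise bound $P_2^{\,2}\le P_3\,P_1$ for each fixed $(x,x')$. Writing $c_\theta^{\,2}=c_\theta^{\,3/2}\,c_\theta^{\,1/2}$ and applying Cauchy--Schwarz,
$$
P_2=\sum_\theta c_\theta^{\,3/2}\,c_\theta^{\,1/2}\le\Bigl(\sum_\theta c_\theta^{\,3}\Bigr)^{1/2}\Bigl(\sum_\theta c_\theta\Bigr)^{1/2}=P_3^{\,1/2}P_1^{\,1/2},
$$
so that $P_2^{\,2}\le P_3\,P_1$. Equivalently, since $2$ is the midpoint of $1$ and $3$, this is exactly the midpoint case of the convexity of $r\mapsto \log P_r$, which is the elementary interpolation inequality underlying the three-lines bound. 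Summing this pointwise estimate over all $x,x'\in\mathbb{F}_q^2$ yields $\Psi(2,2)\le\Psi(3,1)$, as desired.

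I do not expect a genuine obstacle in this lemma: the only conceptual point is recognizing that the $\theta$- and $\phi$-sums separate, after which the statement collapses to Cauchy--Schwarz (equivalently, midpoint log-convexity of power sums). The real difficulty is deferred to the \emph{next} step of the argument, where the right-hand side $\Psi(3,1)$ — now carrying a cube $\lambda_\theta^3$ that is amenable to the triangle estimates of Section~\ref{s:triangle} together with the mild, controllable factor $\lambda_\phi^1$ — must actually be bounded to produce the threshold $|E|\ge q^{12/7}$.
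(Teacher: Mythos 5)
Your proof is correct, and it takes a genuinely different route from the paper. The paper proves this lemma by setting $\psi(\sigma)=\Psi(2+\sigma,2-\sigma)$, observing $|\psi(\sigma)|\leq \psi(\Re(\sigma))$ since $\lambda_\theta\geq 0$, and invoking the Hadamard three-lines theorem to get $\Psi(2,2)=\psi(0)\leq\sqrt{\psi(-1)\psi(1)}=\sqrt{\Psi(1,3)\Psi(3,1)}=\Psi(3,1)$, using the symmetry $\Psi(1,3)=\Psi(3,1)$. You instead notice that for fixed $(x,x')$ the $\theta$- and $\phi$-sums decouple into a product of power sums $P_aP_b$, and then apply Cauchy--Schwarz pointwise in $(x,x')$ to get $P_2^2\leq P_3P_1$ before summing. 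Your argument is more elementary and self-contained: it avoids the complex-analytic machinery entirely, and with it the technical points the paper glosses over (what $\lambda_\theta^{\,\sigma}$ means for complex $\sigma$ at terms where $\lambda_\theta=0$, and verifying the hypotheses of the three-lines theorem on the strip). What the paper's formulation buys is a framework: the authors explicitly advertise the interpolation viewpoint as the seed of a ``generalized-distance-graph calculus,'' where one would interpolate between exponent pairs that need not be related by a midpoint identity, and for which a single application of Cauchy--Schwarz would not suffice; your observation that the present case collapses to midpoint log-convexity of power sums is exactly the discrete shadow of that principle, as you say, but the complex-interpolation statement is the one that generalizes.
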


\vskip.125in

\begin{proof}
First, let $\psi(\sigma)=\Psi(2+\sigma,2-\sigma)$.  Then, since $\lambda_{\theta}$ is always real and non-negative, it follows that
$$
|\psi(\sigma)|
\leq \psi(\Re(\sigma)).  
$$
By the Hadamard three-lines theorem, it then follows that
$$
\Psi(2,2)=\psi(0)\leq \sqrt{\psi(-1)\psi(1)}=\Psi(3,1).
$$
    
\end{proof}

\vskip.125in

The kite graph $K$ corresponds to $\Psi(3,1)$ in the sense that the bow tie graph $B$ corresponds to $\Psi(2,2)$.  We do not need to directly refer to $K$ in order to bound $\Psi(3,1)$ and hence to bound $\Psi(2,2)$ and to understand $\Delta_B(E)$, but placing it in this context is useful heuristically.  By a similar reasoning with larger graphs with rigid components, one might hope to build a generalized-distance-graph calculus, understanding $\Delta_G(E)$ for a family of graphs $G$ by an interpolation scheme involving less complicated graphs.

\subsection{Estimating $\Psi(3,1)$ corresponding to the kite graph $K$.}
We want to estimate the sum
$$
\Psi(3,1)=\sum_{\substack{x,x'\in \mathbb{F}_q^2 \\ \theta,\phi\in O(\mathbb{F}_q^2)}}\lambda_{\theta}^3(x-\theta x')\lambda_{\phi}(x-\phi x')
$$
It will be helpful in our analysis to remove the $\lambda_{\phi}$ term, by instead only considering the case where $||x-z||=a$ for a fixed $a\in \mathbb{F}_q$, and then later summing in $a$.  For $t_6=a$ fixed, but $t_1,...,t_5$ varying, the number of pairs $(u,v,x,y,z),(u',v',x',y',z')$ corresponding to the same $t\in \mathbb{F}_q^6$ is given by
$$
\sum_{\theta\in O(\mathbb{F}_q^2)}\sum_{x,x'\in \mathbb{F}_q^2}\lambda_{\theta}^3(x-\theta x')f(x)f(x'),
$$
where
$$
f(x)=E(x)\cdot E\ast S_a(x).
$$
\begin{definition}
Let $f(x)=E(x)\cdot E\ast S_a(x)$.  We define $\alpha_{\theta}(w)$, a weighted count of pairs $x,x'\in E$ satisfying $x-\theta x'=w$, by
$$
\sum_{w}g(w)\alpha_{\theta}(w)
=\sum_{x,x'\in \mathbb{F}_q^2}g(x-\theta x')f(x)f(x').
$$
for an arbitrary function $g$.  Equivalently,
$$
\alpha_{\theta}(w):=\sum_{\substack{x,x'\in \mathbb{F}_q^2 \\ x-\theta x'=w }}f(x)f(x').
$$
\end{definition}

\vskip.125in

\begin{lemma}\label{properties_alpha}
We collect some information about $\alpha_{\theta}$ which will be useful later.
\begin{enumerate}
\item
$$
\widehat{\alpha_{\theta}}(m)=q^2\hat{f}(m)\overline{\hat{f}(\theta^{-1}m)},   \ \textup{and in particular},
$$
$$
\widehat{\alpha_{\theta}}(0)\approx \frac{|E|^4}{q^4} \ \textup{if} \ |E|\geq Cq^{\frac{3}{2}}.
$$
\item 
$$
||\alpha_{\theta}||_{L^{\infty}}\lesssim |E|^2
$$
\end{enumerate}

\end{lemma}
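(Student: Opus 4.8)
The plan is to treat the two assertions separately: assertion (1) is a direct transcription of the Fourier computation already carried out for $\lambda_\theta$ in the proof of Lemma \ref{bound1}, while assertion (2) is a pointwise estimate that reduces to counting intersections of equal-radius circles. For (1), I would first observe that $\alpha_\theta$ is assembled from $f$ in exactly the way $\lambda_\theta$ is assembled from $E$, so the derivation of $\widehat{\lambda_\theta}(m)=q^2\hat E(m)\overline{\hat E(\theta^{-1}m)}$ transcribes verbatim: feeding $g(w)=\chi(-m\cdot w)$ into the defining relation for $\alpha_\theta$ and using $m\cdot\theta x'=(\theta^{-1}m)\cdot x'$ (orthogonality of $\theta$), the double sum factors, and since $f$ is real-valued,
$$\widehat{\alpha_\theta}(m)=q^{-2}\Big(\sum_{x}\chi(-m\cdot x)f(x)\Big)\Big(\sum_{x'}\chi((\theta^{-1}m)\cdot x')f(x')\Big)=q^2\hat f(m)\overline{\hat f(\theta^{-1}m)}.$$
Taking $m=0$ leaves $\widehat{\alpha_\theta}(0)=q^2|\hat f(0)|^2$, so everything reduces to estimating $\hat f(0)=q^{-2}\sum_x f(x)$.

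\emph{The zeroth coefficient.} The point is that $\sum_x f(x)=\sum_x E(x)(E\ast S_a)(x)$ is exactly $\nu(a)$, the number of ordered pairs $(x,y)\in E^2$ with $||x-y||=a$. For $a\neq 0$ I would show $\nu(a)\approx|E|^2/q$ by the standard expansion $\nu(a)=q^4\sum_m|\hat E(m)|^2\overline{\widehat{S_a}(m)}$, obtained from Parseval together with $\widehat{E\ast S_a}=q^2\hat E\,\widehat{S_a}$: the $m=0$ term contributes $q^{-2}|E|^2|S_a|\approx|E|^2/q$ because $|S_a|=q+O(1)$, while the Gauss-sum bound $|\widehat{S_a}(m)|\lesssim q^{-3/2}$ for $m\neq 0$ controls the rest by $q^{5/2}\sum_m|\hat E(m)|^2=q^{1/2}|E|$. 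This error is dominated by the main term exactly when $|E|\geq Cq^{3/2}$, whence $\hat f(0)\approx|E|^2/q^3$ and $\widehat{\alpha_\theta}(0)\approx|E|^4/q^4$.

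\emph{The $L^\infty$ bound.} Since $\theta$ and $w$ are fixed, $x'=\theta^{-1}(x-w)$ is determined by $x$, so $\alpha_\theta(w)=\sum_x f(x)f(\theta^{-1}(x-w))$. Using $f\leq E\ast S_a$ pointwise, then Cauchy--Schwarz together with the bijectivity of $x\mapsto\theta^{-1}(x-w)$, I would bound
$$\alpha_\theta(w)\leq\sum_x(E\ast S_a)(x)\,(E\ast S_a)(\theta^{-1}(x-w))\leq||E\ast S_a||_{L^2}^2=\sum_{y,y'\in E}|(y+S_a)\cap(y'+S_a)|.$$
The diagonal $y=y'$ contributes $|E|\,|S_a|\lesssim q|E|$. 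For $y\neq y'$, the standing hypothesis that $-1$ is a non-square in $\mathbb{F}_q$ makes the form $x_1^2+x_2^2$ anisotropic, so the radical axis of two distinct equal-radius circles is a genuine line meeting each circle in at most two points; hence every off-diagonal term is $O(1)$ and they sum to $O(|E|^2)$. Therefore $||E\ast S_a||_{L^2}^2\lesssim|E|^2+q|E|\lesssim|E|^2$, the last step using $|E|\gtrsim q$, which is guaranteed by the ambient hypothesis $|E|\geq q^{\frac{12}{7}}$.

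\emph{Main obstacle and caveat.} The only substantive ingredient is the $q^{-3/2}$ decay of $\widehat{S_a}$ off the origin, which drives the estimate $\nu(a)\approx|E|^2/q$ and is precisely what forces the $q^{3/2}$ threshold; the remaining steps are routine bookkeeping. The one point demanding care is the degenerate radius $a=0$: there $S_0=\{0\}$ and $f=E$, so both claims hold trivially (with $\widehat{\alpha_\theta}(0)=|E|^2/q^2$), and the asymptotic in (1) is to be read for $a\neq 0$, which is the only case relevant to the later summation in $a$.
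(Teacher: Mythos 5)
Your proposal is correct. Part (1) follows the paper's computation exactly: plug $g(w)=\chi(-m\cdot w)$ into the defining relation for $\alpha_\theta$, factor using orthogonality of $\theta$, and reduce the zeroth coefficient to $\hat f(0)$. The only difference there is that the paper simply cites \cite{IR07} for $\hat f(0)\approx |E|^2/q^3$ when $|E|\geq Cq^{3/2}$, whereas you re-derive it from the expansion $\nu(a)=q^4\sum_m|\hat E(m)|^2\overline{\widehat{S_a}(m)}$ and the decay $|\widehat{S_a}(m)|\lesssim q^{-3/2}$; that is precisely the content of the cited result, so this is the same argument made self-contained. Part (2) is where you genuinely diverge: the paper uses the crude pointwise bound $f(x)\lesssim q$ (a circle has at most $q+1$ points), leaving $\alpha_\theta(w)\leq q\sum_x f(x)\approx q\cdot\frac{|E|^2}{q}=|E|^2$, again via the \cite{IR07} distance count. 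You instead bound $f\leq E\ast S_a$, apply Cauchy--Schwarz with the bijection $x\mapsto\theta^{-1}(x-w)$ to get $\alpha_\theta(w)\leq\|E\ast S_a\|_{L^2}^2$, and then count intersections of equal-radius circles, using anisotropy of the form (the paper's standing hypothesis $p\equiv 3 \bmod 4$, $n$ odd) to guarantee each off-diagonal pair contributes $O(1)$. Both routes are valid; yours buys a weaker implicit size hypothesis for part (2) ($|E|\gtrsim q$ rather than the $|E|\gtrsim q^{3/2}$ hidden in the paper's appeal to $\|f\|_{L^1}\approx|E|^2/q$), at the cost of invoking the geometric circle-intersection fact, while the paper's is shorter given the citation. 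Your closing caveat about $a=0$, where $S_0=\{0\}$ and the asymptotic in (1) degenerates to $|E|^2/q^2$, is a legitimate observation the paper glosses over, and it is consistent with the paper's remark that anisotropy rules out nontrivial circles of radius zero.
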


\begin{proof}
(1) Applying the definition of $\alpha_{\theta}$ to $g(w)=\chi(-m\cdot w)$, we see that
$$
\widehat{\alpha_{\theta}}(m)
=q^{-2}\sum_w\chi(-m\cdot w)\alpha_{\theta}(w)
=q^{-2}\sum_{x,x'}\chi(-m\cdot (x-\theta x'))f(x)f(x')
$$
$$
=q^2\hat{f}(m)\overline{\hat{f}(\theta^{-1} m)}
$$
Moreover, whenever $|E|\geq Cq^{\frac{3}{2}}$,
$$
\hat{f}(0)=q^{-2}||f||_{L^1}\approx \frac{|E|^2}{q^3},
$$
as shown in \cite{IR07}.  It follows that $\widehat{\alpha_{\theta}}(0)\approx \frac{|E|^4}{q^4}$.
\\
\\
(2)  It is clear from the definition that $f(x)\lesssim q$, since there are at most $q+1$ points on a circle over $\mathbb{F}_q$.  Thus,
$$
\alpha_{\theta}(w)=\sum_{\substack{x,x'\in \mathbb{F}_q^2 \\ x-\theta x'=w}}f(x)f(x')
\leq q\sum_{\substack{x,x'\in \mathbb{F}_q^2 \\ x-\theta x'=w}}f(x).
$$
For each $x$, there is exactly one $x'\in \mathbb{F}_q^2$ satisfying $x-\theta x'=w$.  Thus,
$$
\alpha_{\theta}(w)\leq q\sum_xf(x)\approx |E|^2,
$$
again by \cite{IR07}.  
    
\end{proof}

\vskip.125in

By the definition of $\alpha_{\theta}$, we see that
\vskip.125in
\begin{align*}
\sum_{\theta\in O(\mathbb{F}_q^2)}&\sum_{x,x'\in \mathbb{F}_q^2}\lambda_{\theta}^3(x-\theta x')f(x)f(x')
=\sum_{\theta\in O(\mathbb{F}_q^2)}\sum_{w\in \mathbb{F}_q^2}
\lambda_{\theta}^3(w)\alpha_{\theta}(w) \\
=& \sum_{\theta,w}\lambda_{\theta}^2(w)\left(\lambda_{\theta}(w)-\frac{|E|^2}{q^2}\right)\left(\alpha_{\theta}(w)-\frac{|E|^4}{q^4}\right) \\
& \ +\frac{|E|^4}{q^4}\sum_{\theta,w}\lambda_{\theta}^2(w)\left(\lambda_{\theta}(w)-\frac{|E|^2}{q^2}\right) +\frac{|E|^2}{q^2}\sum_{\theta,w}\lambda_{\theta}^2(w)\alpha_{\theta}(w) \\
=& I+II+III
\end{align*}
We have reduced the proof of Theorem \ref{mainfieldthm} to finding appropriate upper bounds for $I$, $II$, and $III$. A few of these follow easily from previous calculations.

\vskip.125in

\begin{lemma}
For $I$ defined as in the above discussion,
$$
|I|\lesssim q^{-1}|E|^{\frac{13}{2}}.
$$
\end{lemma}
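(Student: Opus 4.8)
The plan is to expand the inner square as $\lambda_\theta^2 = \gamma_\theta^2 + 2\frac{|E|^2}{q^2}\gamma_\theta + \frac{|E|^4}{q^4}$, where $\gamma_\theta = \lambda_\theta - \frac{|E|^2}{q^2}$ is the mean-zero fluctuation from Lemma \ref{bound1} and $\beta_\theta := \alpha_\theta - \frac{|E|^4}{q^4}$ is its analogue for $\alpha_\theta$. This splits $I = I_1 + I_2 + I_3$ with $I_1 = \sum_{\theta,w}\gamma_\theta^3\beta_\theta$, $I_2 = 2\frac{|E|^2}{q^2}\sum_{\theta,w}\gamma_\theta^2\beta_\theta$, and $I_3 = \frac{|E|^4}{q^4}\sum_{\theta,w}\gamma_\theta\beta_\theta$. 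The point of the expansion is that $I_2$ is already of exactly the target size, while $I_3$ and $I_1$ ought to be genuinely smaller; the whole difficulty concentrates in $I_1$.

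Before estimating these I would record the second-moment bound for $\beta$ mirroring the first estimate of Lemma \ref{bound1}, namely $\sum_{\theta,w}\beta_\theta^2 \lesssim |E|^5$. This follows by the identical mechanism: Plancherel together with $\widehat{\beta_\theta}(0)=0$ and the formula $\widehat{\alpha_\theta}(m) = q^2\hat f(m)\overline{\hat f(\theta^{-1}m)}$ from Lemma \ref{properties_alpha} gives $\sum_{\theta,w}\beta_\theta^2 = q^6\sum_{\theta, m\neq 0}|\hat f(m)|^2|\hat f(\theta^{-1}m)|^2$. Summing in $\theta$ first and running the sphere-$L^4$ (additive-energy-of-the-sphere) argument from the proof of Lemma \ref{bound1} verbatim, with $f$ in place of $E$ and using $\|f\|_\infty \lesssim q$ and $\|f\|_{L^1}\lesssim |E|^2/q$ so that $\|f\|_{L^{4/3}}\lesssim q^{-1/2}|E|^{3/2}$, yields $\sum_\theta |\hat f(\theta^{-1}m)|^2 \lesssim q^{-4}|E|^3$ uniformly in $m$; combined with $\sum_m |\hat f(m)|^2 \lesssim q^{-2}|E|^2$ this closes the estimate.

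With this in hand, $I_2$ and $I_3$ are the easy pieces. For $I_2$ I would use the pointwise bound $\|\beta_\theta\|_\infty \lesssim |E|^2$ from Lemma \ref{properties_alpha}(2) together with $\sum_{\theta,w}\gamma_\theta^2 \lesssim q|E|^{5/2}$ from Lemma \ref{bound1}, giving $|I_2| \lesssim \frac{|E|^2}{q^2}\cdot |E|^2 \cdot q|E|^{5/2} = q^{-1}|E|^{13/2}$, precisely the claimed bound. For $I_3$ I would avoid all $L^\infty$ losses by passing to the frequency side: Parseval in $w$ and Cauchy-Schwarz jointly in $(\theta,m)$ give $|\sum_{\theta,w}\gamma_\theta\beta_\theta| \le q^2(\sum_{\theta,m}|\widehat{\gamma_\theta}|^2)^{1/2}(\sum_{\theta,m}|\widehat{\beta_\theta}|^2)^{1/2}$, and since $q^2\sum_{\theta,m}|\widehat{\gamma_\theta}|^2 = \sum_{\theta,w}\gamma_\theta^2 \lesssim q|E|^{5/2}$ and $q^2\sum_{\theta,m}|\widehat{\beta_\theta}|^2 = \sum_{\theta,w}\beta_\theta^2 \lesssim |E|^5$, one finds $|I_3| \lesssim \frac{|E|^4}{q^4}\cdot q^{1/2}|E|^{15/4} = q^{-7/2}|E|^{31/4}$, which is dominated by $q^{-1}|E|^{13/2}$ once $|E|\ge q^{12/7}$.

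The hard part will be $I_1 = \sum_{\theta,w}\gamma_\theta^3\beta_\theta$. The naive estimate $|I_1|\le \|\beta_\theta\|_\infty\|\gamma_\theta\|_\infty\sum_{\theta,w}\gamma_\theta^2 \lesssim |E|^2\cdot|E|\cdot q|E|^{5/2} = q|E|^{11/2}$ overshoots the target by a factor $q^{2/7}$ at the critical exponent, the loss coming entirely from the crude pointwise bound $\|\gamma_\theta\|_\infty\le |E|$, which lies far above the typical size of the fluctuation. The remedy I would pursue is a fourth-moment (additive-energy) bound $\sum_{\theta,w}\gamma_\theta^4 \lesssim q^{-1}|E|^5$; interpolating against the second moment by Cauchy-Schwarz, $\sum_{\theta,w}|\gamma_\theta|^3 \le (\sum \gamma_\theta^2)^{1/2}(\sum\gamma_\theta^4)^{1/2}\lesssim |E|^{15/4}$, so that $|I_1|\le \|\beta_\theta\|_\infty \sum_{\theta,w}|\gamma_\theta|^3 \lesssim |E|^{23/4}$, which is below $q^{-1}|E|^{13/2}$ whenever $|E|\ge q^{4/3}$. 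Establishing $\sum_{\theta,w}\gamma_\theta^4\lesssim q^{-1}|E|^5$ is the crux: writing $\sum_w\gamma_\theta^4 = q^2\|\widehat{\gamma_\theta}\ast\widehat{\gamma_\theta}\|_{L^2}^2$ and inserting $\widehat{\gamma_\theta}(m)=q^2\hat E(m)\hat E(\theta^{-1}m)$ reduces the problem to controlling the additive energy of $\widehat{\gamma_\theta}$ -- equivalently, after subtracting the main terms, an eightfold point-count refining $\sum_{\theta,w}\lambda_\theta(w)^4$ -- which I expect to be reachable by iterating the sphere-$L^4$ machinery of Lemma \ref{bound1} one level higher. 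This is the step I anticipate requiring the most care.
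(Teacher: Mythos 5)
Your estimates for $I_2$ and $I_3$ are correct, but your treatment of $I_1=\sum_{\theta,w}\gamma_\theta^3\beta_\theta$ --- which you yourself identify as the crux --- has a genuine gap, and it cannot be repaired as stated. The fourth-moment estimate $\sum_{\theta,w}\gamma_\theta^4\lesssim q^{-1}|E|^5$ that you defer is not just unproven, it is false. Take $E$ to be a union of roughly $q^{5/7}$ concentric circles centered at the origin, so that $|E|\approx q^{12/7}$ (each circle of nonzero radius has $q+1$ points under the paper's hypotheses on $q$). Such an $E$ is invariant under every $\theta\in O(\mathbb{F}_q^2)$, so $\lambda_\theta(0)=|E|$ for every $\theta$, hence $\gamma_\theta(0)=|E|-|E|^2/q^2\approx|E|$, and the $w=0$ terms alone give $\sum_{\theta,w}\gamma_\theta^4\gtrsim q|E|^4$. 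This exceeds your target $q^{-1}|E|^5$ by the factor $q^2/|E|=q^{2/7}$ --- exactly the factor you were trying to recover --- and no iteration of the sphere-$L^4$ machinery can prove a false statement. (The weaker bound your argument actually needs, $\sum_{\theta,w}|\gamma_\theta|^3\lesssim q^{-1}|E|^{9/2}$, is not contradicted by this example, but your route to it through the fourth moment is blocked.)

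The root of the problem is upstream: your second-moment bound $\sum_{\theta,w}\beta_\theta^2\lesssim|E|^5$ is much lossier than the argument can give, and this is what forces you into the decomposition in the first place. You estimated $\|f\|_{L^{4/3}}$ and $\sum_m|\hat f(m)|^2$ by playing $\|f\|_{L^1}\lesssim|E|^2/q$ against $\|f\|_{L^\infty}\lesssim q$; the paper instead uses $\|f\|_{L^2}^2\approx|E|^3/q^2$ (valid once $|E|\gtrsim q^{3/2}$), which gives $\|f\|_{L^{4/3}}\lesssim q^{-1}|E|^{7/4}$ and $\sum_m|\hat f(m)|^2\approx q^{-4}|E|^3$; running your same sphere-$L^4$ recursion with these inputs yields $\sum_{\theta,w}\beta_\theta^2\lesssim q^{-3}|E|^{13/2}$ (the paper carries out this computation inside the proof of the bound for $III$). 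With that bound there is no need to expand $\lambda_\theta^2$ at all: the paper's entire proof is the pointwise bound $\lambda_\theta^2(w)\leq|E|^2$ followed by a single Cauchy--Schwarz,
$$
|I|\leq|E|^2\Bigl(\sum_{\theta,w}\gamma_\theta^2\Bigr)^{1/2}\Bigl(\sum_{\theta,w}\beta_\theta^2\Bigr)^{1/2}\lesssim|E|^2\bigl(q|E|^{5/2}\bigr)^{1/2}\bigl(q^{-3}|E|^{13/2}\bigr)^{1/2}=q^{-1}|E|^{13/2},
$$
using Lemma \ref{bound1} for the $\gamma$ factor. If you strengthen your $\beta$ estimate in this way, all three of your pieces $I_1,I_2,I_3$ follow from this same one-line argument and the decomposition becomes superfluous.
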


\vskip.125in

\begin{proof}
By Cauchy-Schwarz,
$$
I=\sum_{\theta,w}\lambda_{\theta}^2(w)\left(\lambda_{\theta}(w)-\frac{|E|^2}{q^2}\right)\left(\alpha_{\theta}(w)-\frac{|E|^4}{q^4}\right)
$$
$$
\leq |E|^2\left(\sum_{\theta,w}\left(\lambda_{\theta}(w)-\frac{|E|^2}{q^2}\right)^2\right)^{\frac{1}{2}}\left(\sum_{\theta,w}\left(\alpha_{\theta}(w)-\frac{|E|^4}{q^4}\right)^2\right)^{\frac{1}{2}}
$$
$$
\lesssim |E|^2\left(q|E|^{\frac{5}{2}}\right)^{\frac{1}{2}}\left(q^{-3}|E|^{\frac{13}{2}}\right)^{\frac{1}{2}}
=q^{-1}|E|^{\frac{13}{2}}
$$
\end{proof}

\vskip.125in

We introduce a lemma from \cite{BHIPR17}, based on a Taylor series expansion of $x^n$, in order to bound some of the other terms.

\vskip.125in

\begin{lemma}\label{Averaging lemma}
For any function $\phi:X\to \mathbb{R}_{\geq 0}$, where $X=\mathbb{F}_q^d$ or $X=(\mathbb{Z}/q\mathbb{Z})^d$, and for any $n\geq 2$, we have
$$
\sum_{x\in X}\phi^n(x)
\leq q^{-d(n-1)}||\phi||_{L^1}^n+\frac{n(n-1)}{2}||\phi||_{L^{\infty}}^{n-2}\sum_{x\in X}\left(\phi(x)-\frac{||\phi||_{L^1}}{q^d}\right)^2
$$

\end{lemma}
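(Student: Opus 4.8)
The plan is to Taylor-expand the function $t \mapsto t^n$ about the mean value of $\phi$ and to control the resulting second-order remainder pointwise. First I would abbreviate $\mu := ||\phi||_{L^1}/q^d$ for the average value of $\phi$ over $X$, and observe that the first term on the right-hand side is exactly $\sum_{x\in X}\mu^n = q^d\mu^n = q^{-d(n-1)}||\phi||_{L^1}^n$. This reframes the inequality as the assertion that $\sum_x \phi^n(x)$ exceeds $\sum_x \mu^n$ by no more than the displayed quadratic correction, which is the natural ``variance'' term measuring the deviation of $\phi$ from its mean.

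Next, for each fixed $x \in X$, I would apply Taylor's theorem with Lagrange remainder to $f(t) = t^n$, expanded about $t = \mu$ and evaluated at $t = \phi(x)$:
$$
\phi(x)^n = \mu^n + n\mu^{n-1}\bigl(\phi(x) - \mu\bigr) + \frac{n(n-1)}{2}\,\xi_x^{\,n-2}\bigl(\phi(x)-\mu\bigr)^2,
$$
where $\xi_x$ lies between $\mu$ and $\phi(x)$; this identity is exact since $t^n$ is a polynomial. The essential pointwise estimate is the bound on the remainder. Because $\phi \geq 0$ we have $\mu \geq 0$, and because $\phi(x) \leq ||\phi||_{L^{\infty}}$ together with $\mu = q^{-d}\sum_y \phi(y) \leq ||\phi||_{L^{\infty}}$, the intermediate point satisfies $0 \leq \xi_x \leq ||\phi||_{L^{\infty}}$. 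For $n \geq 2$ the map $t \mapsto t^{n-2}$ is non-decreasing on $[0,\infty)$, so $\xi_x^{\,n-2} \leq ||\phi||_{L^{\infty}}^{n-2}$, and since $(\phi(x)-\mu)^2 \geq 0$ the entire remainder is at most $\frac{n(n-1)}{2}||\phi||_{L^{\infty}}^{n-2}(\phi(x)-\mu)^2$.

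Finally, summing the pointwise bound over $x \in X$ gives
$$
\sum_{x}\phi^n(x) \leq \sum_{x}\mu^n + n\mu^{n-1}\sum_{x}\bigl(\phi(x)-\mu\bigr) + \frac{n(n-1)}{2}||\phi||_{L^{\infty}}^{n-2}\sum_{x}\bigl(\phi(x)-\mu\bigr)^2,
$$
and the linear term vanishes identically, since $\sum_x(\phi(x)-\mu) = ||\phi||_{L^1} - q^d\mu = 0$ by the very definition of $\mu$. Replacing $\sum_x \mu^n$ by $q^{-d(n-1)}||\phi||_{L^1}^n$ then yields the claim. I do not expect a genuine obstacle here: the only steps needing care are checking that the intermediate value $\xi_x$ remains in $[0,||\phi||_{L^{\infty}}]$ — which is precisely where the nonnegativity of $\phi$ and the correct normalization of $\mu$ enter — and that the monotonicity of $t \mapsto t^{n-2}$ (and hence the remainder bound) requires the hypothesis $n \geq 2$, with the boundary case $n=2$ reducing to the trivial bound $\xi_x^{0} = 1 = ||\phi||_{L^{\infty}}^{0}$.
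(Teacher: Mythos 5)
Your proof is correct: the Taylor expansion of $t^n$ about the mean $\mu = \|\phi\|_{L^1}/q^d$ with Lagrange remainder, the vanishing of the linear term by the definition of $\mu$, and the bound $\xi_x^{\,n-2} \le \|\phi\|_{L^\infty}^{n-2}$ via $0 \le \xi_x \le \|\phi\|_{L^\infty}$ are all sound, including the normalization check that $\sum_x \mu^n = q^{-d(n-1)}\|\phi\|_{L^1}^n$. This is exactly the approach the paper indicates — it states the lemma as ``based on a Taylor series expansion of $x^n$'' and defers to \cite{BHIPR17} without reproducing the argument — so your write-up simply supplies the proof the paper omits.
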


\vskip.125in

\begin{lemma}
If $|E|\geq q^{\frac{12}{7}}$, then
$$
|II|\lesssim q^{-6}|E|^{\frac{37}{4}}
$$
\end{lemma}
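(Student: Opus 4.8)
The plan is to bound the inner sum
$$
\Sigma:=\sum_{\theta,w}\lambda_\theta^2(w)\left(\lambda_\theta(w)-\frac{|E|^2}{q^2}\right),
$$
since $|II|=\frac{|E|^4}{q^4}|\Sigma|$, and then to convert the resulting estimate into the stated form using the hypothesis $|E|\ge q^{\frac{12}{7}}$. The first thing to notice is that $\Sigma=\sum_{\theta,w}\lambda_\theta^3(w)-\frac{|E|^2}{q^2}\sum_{\theta,w}\lambda_\theta^2(w)$ is a difference of two quantities, each of which has a main term of size $\sim |E|^6/q^3$ (by Corollary \ref{Triangle_corollary} and Lemma \ref{bound1}). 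Thus I cannot simply bound the two pieces separately; the content of the lemma is that these main terms cancel, and the whole difficulty lies in exhibiting that cancellation.

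To capture it cleanly, I would expand everything around the mean. Writing $\gamma_\theta:=\lambda_\theta-\frac{|E|^2}{q^2}$, which satisfies $\sum_w\gamma_\theta(w)=0$ for each fixed $\theta$ (because $\sum_w\lambda_\theta(w)=|E|^2$), one has the identity $\lambda_\theta^2\gamma_\theta=\gamma_\theta^3+2\frac{|E|^2}{q^2}\gamma_\theta^2+\frac{|E|^4}{q^4}\gamma_\theta$. Summing in $w$ annihilates the last term, leaving $\Sigma=\sum_{\theta,w}\gamma_\theta^3+2\frac{|E|^2}{q^2}\sum_{\theta,w}\gamma_\theta^2$. (Equivalently, one may feed $\lambda_\theta^3$ into Lemma \ref{Averaging lemma} with $n=3$ and use the exact $n=2$ identity for $\lambda_\theta^2$; the two $q^{-4}|E|^6$ leading terms then cancel automatically, which is the natural role of the averaging lemma here.) Using the pointwise bound $\|\gamma_\theta\|_{L^\infty}\le |E|$, which follows from $|\lambda_\theta(w)|\le |E|$ together with $|E|\le q^2$, I would estimate $\left|\sum_{\theta,w}\gamma_\theta^3\right|\le |E|\sum_{\theta,w}\gamma_\theta^2$ and, since $\frac{|E|^2}{q^2}\le |E|$, absorb the remaining term as well, obtaining $|\Sigma|\lesssim |E|\sum_{\theta,w}\gamma_\theta^2$.

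At this point Lemma \ref{bound1} supplies exactly what is needed, namely $\sum_{\theta,w}\gamma_\theta^2=\sum_{\theta,w}\left(\lambda_\theta(w)-\frac{|E|^2}{q^2}\right)^2\lesssim q|E|^{\frac{5}{2}}$. Hence $|\Sigma|\lesssim q|E|^{\frac{7}{2}}$ and $|II|=\frac{|E|^4}{q^4}|\Sigma|\lesssim q^{-3}|E|^{\frac{15}{2}}$. Finally I would invoke $|E|\ge q^{\frac{12}{7}}$, which is equivalent to $q^3\le |E|^{\frac{7}{4}}$, to rewrite $q^{-3}|E|^{\frac{15}{2}}=q^{-6}\cdot q^3\cdot|E|^{\frac{15}{2}}\le q^{-6}|E|^{\frac{7}{4}}|E|^{\frac{15}{2}}=q^{-6}|E|^{\frac{37}{4}}$, which is the claim. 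I note that the intermediate bound $q^{-3}|E|^{\frac{15}{2}}$ is in fact sharper; the two expressions coincide exactly at the threshold $|E|=q^{\frac{12}{7}}$, and I record the estimate in the form $q^{-6}|E|^{\frac{37}{4}}$ only so that it visibly beats the target $q^{-6}|E|^{10}$ for $\sum_t\nu_B(t)^2$.

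The only genuine obstacle is the cancellation flagged in the first paragraph: any approach that estimates $\sum_{\theta,w}\lambda_\theta^3$ and $\frac{|E|^2}{q^2}\sum_{\theta,w}\lambda_\theta^2$ by separate upper bounds discards the shared $|E|^6/q^3$ main term and yields a bound far too weak for the theorem. Organizing the computation through $\gamma_\theta$ (or, equivalently, through Lemma \ref{Averaging lemma}) is precisely what forces those main terms to drop out, after which the problem reduces to the already-established $L^2$ estimate of Lemma \ref{bound1} plus the elementary threshold substitution.
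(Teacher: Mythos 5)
Your proof is correct, and it takes a genuinely different route from the paper's. Writing $\gamma_\theta:=\lambda_\theta-\frac{|E|^2}{q^2}$, the paper bounds $II$ by a single application of Cauchy--Schwarz,
$$
|II|\lesssim \frac{|E|^4}{q^4}\left(\sum_{\theta,w}\lambda_{\theta}^4(w)\right)^{\frac12}\left(\sum_{\theta,w}\gamma_\theta(w)^2\right)^{\frac12},
$$
and then controls the fourth moment via Lemma \ref{Averaging lemma} with $n=4$, invoking the hypothesis $|E|\ge q^{\frac{12}{7}}$ at that stage so that the main term $|E|^8/q^5$ dominates $q|E|^{\frac92}$; combined with Lemma \ref{bound1} this lands exactly on $q^{-6}|E|^{\frac{37}{4}}$. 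You instead expand $\lambda_\theta^2\gamma_\theta$ exactly in powers of $\gamma_\theta$, kill the linear term using $\sum_w\gamma_\theta(w)=0$, and bound what remains by $\|\gamma_\theta\|_{L^\infty}\le|E|$ times the $L^2$ quantity of Lemma \ref{bound1}, deferring the hypothesis to a cosmetic conversion at the end. Both arguments preserve the essential cancellation in the same way --- by never splitting the factor $\lambda_\theta-\frac{|E|^2}{q^2}$ into separate $\lambda_\theta^3$ and $\lambda_\theta^2$ sums --- and both rest on Lemma \ref{bound1}. Your route buys an unconditional and slightly sharper intermediate bound, $|II|\lesssim q^{-3}|E|^{\frac{15}{2}}$ (requiring only the trivial $|E|\le q^2$), which agrees with the paper's bound exactly at the threshold and is strictly better above it; the paper's route buys economy, since the fourth-moment estimate $\sum_{\theta,w}\lambda_\theta^4\lesssim |E|^8/q^5$ is needed verbatim again in the bound for $III$, so no separate algebraic identity has to be introduced. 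One small caveat: your parenthetical claim that the same conclusion follows ``equivalently'' from Lemma \ref{Averaging lemma} with $n=3$ is not immediate as stated, because that lemma is one-sided --- it bounds $\sum_{\theta,w}\lambda_\theta^3$ from above only, hence bounds $\Sigma$ from above but not $|\Sigma|$; to close that variant one would also need the lower bound $\sum_w\lambda_\theta^3(w)\ge q^{-4}|E|^6$ (power mean inequality), which gives $\Sigma\ge -\frac{|E|^2}{q^2}\sum_{\theta,w}\gamma_\theta(w)^2$. Your main argument via the exact identity has no such gap.
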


\vskip.125in

\begin{proof}
By Lemma \ref{Averaging lemma}, we see that
$$
\sum_{\theta,w}\lambda_{\theta}^4(w)
\leq \sum_{\theta}\frac{|E|^8}{q^6}+|E|^2\sum_{\theta,w}\left(\lambda_{\theta}(w)-\frac{|E|^2}{q^2}\right)^2
\lesssim \frac{|E|^8}{q^5}+q|E|^{\frac{9}{2}}
$$
$$
\lesssim \frac{|E|^8}{q^5},
$$
as long as $|E|\geq q^{\frac{12}{7}}$.  We use this, and Cauchy-Schwarz, to bound $II$.
$$
II=\frac{|E|^4}{q^4}\sum_{\theta,w}\lambda_{\theta}^2(w)\left(\lambda_{\theta}(w)-\frac{|E|^2}{q^2}\right)
$$
$$
\lesssim \frac{|E|^4}{q^4}\left(\sum_{\theta,w}\lambda_{\theta}^4(w)\right)^{\frac{1}{2}}\left(\sum_{\theta,w}\left(\lambda_{\theta}(w)-\frac{|E|^2}{q^2}\right)^2\right)^{\frac{1}{2}}
$$
$$
\lesssim \frac{|E|^4}{q^4}\cdot q^{-\frac{5}{2}}|E|^4\cdot \left(q|E|^{\frac{5}{2}}\right)^{\frac{1}{2}}
$$
$$
=q^{-6}|E|^{\frac{37}{4}}
$$
    
\end{proof}

\begin{lemma}
If $|E|\geq q^{\frac{12}{7}}$, then
$$
III\leq \frac{|E|^{10}}{q^7}.
$$
\end{lemma}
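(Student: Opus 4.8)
The plan is to prove the equivalent statement $\sum_{\theta,w}\lambda_\theta^2(w)\alpha_\theta(w)\lesssim |E|^8/q^5$, since $III=\frac{|E|^2}{q^2}\sum_{\theta,w}\lambda_\theta^2(w)\alpha_\theta(w)$ and $\frac{|E|^2}{q^2}\cdot\frac{|E|^8}{q^5}=\frac{|E|^{10}}{q^7}$. The naive estimate $\alpha_\theta(w)\lesssim|E|^2$ from Lemma \ref{properties_alpha} will not suffice here: combined with the bound $\sum_{\theta,w}\lambda_\theta^2(w)\lesssim q|E|^{5/2}+|E|^4/q$ from Lemma \ref{bound1} it only yields $III\lesssim |E|^{13/2}/q+|E|^8/q^3$, which exceeds $|E|^{10}/q^7$ unless $|E|\ge q^2$. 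Instead I would separate the mean of $\alpha_\theta$, exactly as was done for $\lambda_\theta$ in the analysis of the triangle.

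Concretely, write $\alpha_\theta(w)=\bigl(\alpha_\theta(w)-\tfrac{|E|^4}{q^4}\bigr)+\tfrac{|E|^4}{q^4}$, where $\tfrac{|E|^4}{q^4}=\widehat{\alpha_\theta}(0)$ by Lemma \ref{properties_alpha}. This splits the sum into a main term and an error term:
$$
\sum_{\theta,w}\lambda_\theta^2(w)\alpha_\theta(w)=\underbrace{\sum_{\theta,w}\lambda_\theta^2(w)\left(\alpha_\theta(w)-\frac{|E|^4}{q^4}\right)}_{\text{error}}+\frac{|E|^4}{q^4}\underbrace{\sum_{\theta,w}\lambda_\theta^2(w)}_{\text{main}}.
$$
For the main term, Lemma \ref{bound1} gives $\frac{|E|^4}{q^4}\sum_{\theta,w}\lambda_\theta^2(w)\lesssim \frac{|E|^{13/2}}{q^3}+\frac{|E|^8}{q^5}$, and the first summand is dominated by the second as soon as $|E|\gtrsim q^{4/3}$, which holds since $|E|\ge q^{12/7}$; thus the main term is $\lesssim|E|^8/q^5$, matching the target exactly.

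For the error term I would apply Cauchy--Schwarz,
$$
\left|\sum_{\theta,w}\lambda_\theta^2(w)\left(\alpha_\theta(w)-\frac{|E|^4}{q^4}\right)\right|\le\left(\sum_{\theta,w}\lambda_\theta^4(w)\right)^{1/2}\left(\sum_{\theta,w}\left(\alpha_\theta(w)-\frac{|E|^4}{q^4}\right)^2\right)^{1/2},
$$
and feed in two estimates already produced in the excerpt: the fourth-moment bound $\sum_{\theta,w}\lambda_\theta^4(w)\lesssim |E|^8/q^5$, valid precisely when $|E|\ge q^{12/7}$ (from the computation in the proof of the bound on $II$), and the $L^2$-fluctuation bound $\sum_{\theta,w}\bigl(\alpha_\theta(w)-|E|^4/q^4\bigr)^2\lesssim q^{-3}|E|^{13/2}$ (the estimate invoked in bounding $I$). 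Multiplying gives $\lesssim |E|^{29/4}/q^4$, which is again $\lesssim|E|^8/q^5$ whenever $|E|\gtrsim q^{4/3}$. Combining the two parts yields $\sum_{\theta,w}\lambda_\theta^2(w)\alpha_\theta(w)\lesssim|E|^8/q^5$, and hence $III\lesssim|E|^{10}/q^7$.

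The bookkeeping above is routine; the genuine analytic content lives entirely in the two inputs, and in particular in the $L^2$-fluctuation bound for $\alpha_\theta$, which I expect to be the main obstacle. That estimate is the restriction-type inequality for $f=E\cdot(E\ast S_a)$ in place of $E$: using $\widehat{\alpha_\theta}(m)=q^2\hat f(m)\overline{\hat f(\theta^{-1}m)}$ from Lemma \ref{properties_alpha}, one sums in $\theta$ to reduce matters to controlling $\sum_{\|\ell\|=t}|\hat f(\ell)|^2$ by the same $L^4$/sphere-convolution argument carried out for $E$ in Lemma \ref{bound1}. This is the one place where the hypotheses on $q$ (ensuring no zero-radius circles) and the extra factor $E\ast S_a$ genuinely interact, so I would expect to prove this fluctuation bound carefully as a separate step before assembling the three terms.
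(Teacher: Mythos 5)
Your proposal is correct and takes essentially the same route as the paper: a Cauchy--Schwarz step pairing $\sum_{\theta,w}\lambda_\theta^4(w)\lesssim |E|^8/q^5$ (valid for $|E|\geq q^{12/7}$) against the $L^2$-fluctuation bound $\sum_{\theta,w}\bigl(\alpha_\theta(w)-|E|^4/q^4\bigr)^2\lesssim q^{-3}|E|^{13/2}$, the latter proved exactly as you sketch, via $\widehat{\alpha_\theta}(m)=q^2\hat f(m)\overline{\hat f(\theta^{-1}m)}$, the sphere $L^4$ argument, and Riesz--Thorin for $\|f\|_{L^{4/3}}$. The only difference is cosmetic bookkeeping: you subtract the mean of $\alpha_\theta$ before applying Cauchy--Schwarz and control the main term with Lemma \ref{bound1}, whereas the paper applies Cauchy--Schwarz to the full sum $\sum_{\theta,w}\lambda_\theta^2(w)\alpha_\theta(w)$ and absorbs the mean of $\alpha_\theta$ through Lemma \ref{Averaging lemma} applied to $\sum_{\theta,w}\alpha_\theta(w)^2$; both reductions rest on the same two estimates and yield the same bound.
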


\vskip.125in

\begin{proof}
By Cauchy-Schwarz,
$$
III=2\frac{|E|^2}{q^2}\sum_{\theta,w}\lambda_{\theta}(w)^2\alpha_{\theta}(w)
\lesssim \frac{|E|^2}{q^2}\left(\sum_{\theta,w}\lambda_{\theta}(w)^4\right)^{\frac{1}{2}}\left(\sum_{\theta,w}\alpha_{\theta}(w)^2\right)^{\frac{1}{2}}.
$$
We can apply Lemma \ref{Averaging lemma} to both of these sums.
$$
\sum_{\theta,w}\lambda_{\theta}(w)^4
\lesssim \sum_{\theta}\frac{|E|^8}{q^6}+|E|^2\sum_{\theta,w}\left(\lambda_{\theta}(w)-\frac{|E|^2}{q^2}\right)^2
$$
$$
\lesssim \frac{|E|^8}{q^5}+q|E|^{\frac{9}{2}},
$$
by Lemma \ref{bound1}.  Similarly,
$$
\sum_{\theta,w}\alpha_{\theta}(w)^2
\lesssim \sum_{\theta}\frac{|E|^8}{q^6}+\sum_{\theta,w}\left(\alpha_{\theta}(w)-\frac{|E|^4}{q^4}\right)^2
$$
By Lemma \ref{properties_alpha},
$$
\sum_{\theta,w}(\alpha_{\theta}(w)-\widehat{\alpha_{\theta}}(0))^2
=q^6\sum_{\theta}\sum_{m\neq 0}|\hat{f}(m)|^2|\hat{f}(\theta^{-1} m)|^2
$$
Summing first in $\theta$, let $t=||m||$ and $h(\ell)=\overline{\hat{f}}$, so that
$$
\sum_{\theta}|\hat{f}(\theta^{-1} m)|^2
=2\sum_{||\ell||=t}|\hat{f}(\ell)|^2
=2\sum_{\ell}\hat{f}(\ell)S_t(\ell)h(\ell)
=2\sum_{\ell}f(\ell)\widehat{hS_t}(\ell)
$$
$$
\lesssim ||f||_{L^{\frac{4}{3}}}||\widehat{hS_t}||_{L^4}.
$$
From the same argument as in Lemma \ref{bound1}, we see that
$$
||\widehat{hS_t}||_{L^4}^4\lesssim q^{-6}||hS_t||_{L^2}^4.
$$
To bound $||f||_{L^{\frac{4}{3}}}$, we interpolate by Riesz-Thorin between $L^1$ and $L^2$, since $||f||_{L^1}$ and $||f||_{L^2}$ are known:
$$
||f||_{L^1}\sim \frac{|E|^2}{q}, \ \ \ ||f||_{L^2}\sim \frac{|E|^{\frac{3}{2}}}{q}.
$$
Therefore,
$$
||f||_{L^{\frac{4}{3}}}
\leq \left(\frac{|E|^2}{q}\right)^{\frac{1}{2}}\left(\frac{|E|^{\frac{3}{2}}}{q}\right)^{\frac{1}{2}}
=q^{-1}|E|^{\frac{7}{4}}.
$$
This yields a recursive inequality,
$$
\sum_{\theta}|\hat{f}(\theta^{-1} m)|^2=2||hS_t||_{L^2}^2
\lesssim q^{-\frac{5}{2}}|E|^{\frac{7}{4}}||hS_t||_{L^2},
$$
and so
$$
||hS_t||_{L^2}\lesssim q^{-\frac{5}{2}}|E|^{\frac{7}{4}},
$$
and
$$
\sum_{\theta}|\hat{f}(\theta^{-1} m)|^2
\lesssim q^{-5}|E|^{\frac{7}{2}}.
$$
Moreover,
$$
\sum_{m\in \mathbb{F}_q^2}|\hat{f}(m)|^2
=q^{-2}\sum_x{f(x)^2}
\approx \frac{|E|^3}{q^4}.
$$
Therefore,
$$
\sum_{\theta,w}(\alpha_{\theta}(w)-\widehat{\alpha_{\theta}}(0))^2
\lesssim q^6\cdot q^{-1}|E|^{\frac{7}{4}}\cdot q^{-\frac{3}{2}}\cdot q^{-\frac{5}{2}}|E|^{\frac{7}{4}}\cdot q^{-4}|E|^3
$$
$$
=q^{-3}|E|^{\frac{13}{2}}
$$ 
Putting together the above calculations, if $|E|\geq q^{\frac{12}{7}}$, then
$$
III
\lesssim \frac{|E|^2}{q^2}\left(\frac{|E|^8}{q^5}+q|E|^{\frac{9}{2}}\right)^{\frac{1}{2}}\left(\frac{|E|^8}{q^5}+q^{-3}|E|^{\frac{13}{2}}\right)^{\frac{1}{2}}\lesssim \frac{|E|^{10}}{q^7}.
$$
\end{proof}
Recall that these quantities depend on a parameter $a\in \mathbb{F}_q$, and we still need to sum in $a$, which will yield an extra factor of $q$ in each of these bounds.  Therefore, as long as $|E|\geq q^{\frac{12}{7}}$, 
$$
\Psi(3,1)
=\sum_{a}\sum_{\theta}\sum_{x,x'}\lambda_{\theta}^3(x-\theta x')f(x)f(x')
\lesssim \max\left(|E|^{\frac{13}{2}},q^{-5}|E|^{\frac{37}{4}},q^{-6}|E|^{10}\right).
$$
Finally, we see that
$$
\frac{|E|^{10}}{\Psi(3,1)}\geq cq^6
$$
as long as $|E|\geq Cq^{\frac{12}{7}}$.  This completes the proof of Theorem \ref{mainfieldthm}.

\newpage



\end{document}